\documentclass[12pt]{amsart} 

\usepackage[all,cmtip,2cell]{xy}

\usepackage{enumitem}
\usepackage{amsmath,amsthm,amssymb}
\usepackage{words}
\usepackage{geometry}
\usepackage[colorlinks=true]{hyperref}
\usepackage{widebar}
\usepackage[dvipsnames]{xcolor}

\usepackage{comment}

\synctex=1

\providecommand{\Log}{\cat{Log}}

\providecommand{\ocM}{{\woverbarchoice{\mathcal}{M}}}
\providecommand{\cE}{{\mathcal{E}}}
\providecommand{\cW}{{\mathcal{W}}}
\providecommand{\ucX}{{\wunderbarchoice{\mathcal}{X}}}

\providecommand{\uD}{{\wunderbarchoice{\mathnormal}{D}}}

\providecommand{\cI}{{\mathcal{I}}}

\providecommand{\uX}{{\wunderbarchoice{\mathnormal}{X}}}
\providecommand{\uY}{{\wunderbarchoice{\mathnormal}{Y}}}
\providecommand{\uZ}{{\wunderbarchoice{\mathnormal}{Z}}}
\providecommand{\uU}{{\wunderbarchoice{\mathnormal}{U}}}
\providecommand{\uV}{{\wunderbarchoice{\mathnormal}{V}}}
\providecommand{\oCp}{{\oC\vphantom{C}'}}

\providecommand{\cA}{\mathcal{A}}
\providecommand{\fpr}{\mathop{\times}}

\mathchardef\ordinarycolon\mathcode`\:
\mathcode`\:=\string"8000
\begingroup \catcode`\:=\active
  \gdef:{\mathrel{\mathop\ordinarycolon}}
\endgroup
\newcommand\displaceamount{2.8pt}

\newcommand{\doubledown}{\ar@<\displaceamount>[d]\ar@<-\displaceamount>[d]}

\newcommand{\doubleright}{\ar@<\displaceamount>[r]\ar@<-\displaceamount>[r]}

\pagestyle{headings} 
\geometry{margin=1in} 


\newcommand{\change}[1]{#1}

\newtheoremstyle{named}%
	{}%
	{}%
	{\itshape}%
	{}%
	{\bfseries}%
	{.}%
	{.5em}%
	{\thmname{#1 #3}}

\newtheorem{theorem}{Theorem}
\newtheorem{proposition}[theorem]{Proposition}
\newtheorem{corollary}[theorem]{Corollary}

\newtheorem{lemma}[theorem]{Lemma}

\theoremstyle{remark}
\newtheorem{remark}[theorem]{Remark}

\newtheorem{example}[theorem]{Example}
\theoremstyle{named}

\theoremstyle{definition}
\newtheorem{definition}[theorem]{Definition}

\numberwithin{theorem}{subsection}

\begin{document}

\title{Birational invariance in logarithmic Gromov--Witten theory}

\author{Dan Abramovich}

\author{Jonathan Wise}
\date{\today}

\address[Abramovich]{Department of Mathematics\\
Brown University\\
Box 1917\\
Providence, RI 02912\\
U.S.A.}
\email{abrmovic@math.brown.edu}

\address[Wise]{University of Colorado, Boulder\\
Boulder, Colorado 80309-0395\\ USA}

\email{jonathan.wise@math.colorado.edu}

\thanks{Abramovich supported in part by NSF grant
   DMS-1162367. Wise is supported by an NSA Young Investigator's Grant, Award \#H98230-14-1-0107.}
   \subjclass[2010]{14H10, 
 14N35, 
 14D23, 
 14A20, 
 14E05
}
\begin{abstract} Gromov--Witten invariants have been constructed to be deformation invariant, but their behavior under other transformations is subtle.  We show that {\em logarithmic} Gromov--Witten invariants are also invariant under appropriately defined {\em logarithmic modifications}. 
\end{abstract} 
\maketitle

\setcounter{tocdepth}{1}
\setcounter{secnumdepth}{4}
\tableofcontents

\section{Introduction}\label{sec:intro}

\subsection{Main result}

\change{In this paper we answer the following question, posed by Mark Gross.} Consider two logarithmically smooth complex projective varieties $X$ and $Y$ and a logarithmic modification $h: Y \to X$ between them.  \change{How are the logarithmic Gromov--Witten invariants of $X$ and $Y$ related?}  We show in Theorem \ref{maintheorem} that the canonical morphism $\ocM(Y) \to \ocM(X)$ between the associated spaces of logarithmic stale maps is virtually birational, and, as a consequence, the Gromov--Witten theories {with primary insertions coming from $X$} coincide (see Corollary \ref{Cor:invariance}).

If $X$ is a proper logarithmic scheme, there is a logarithmic algebraic stack $\ocM(X)$ parameterizing stable logarithmic maps from logarithmic curves into $X$ \cite{GS, Chen, AC, minimal}.  When $X$ is also logarithmically smooth, the underlying algebraic stack of $\ocM(X)$ has a virtual fundamental class $[ \ocM(X) ]^{\rm vir}$ that can be used to define Gromov--Witten invariants.

A  \emph{logarithmic modification}  is a  proper, birational, logarithmically \'etale morphism $Y \to X$.  By \cite[Theorem B.6]{AMW}, a logarithmic modification $Y \rightarrow X$ induces a morphism $\ocM(Y) \rightarrow \ocM(X)$.

\begin{theorem}\label{maintheorem} 
	Let $h : Y \rightarrow X$ be a logarithmic modification of logarithmically smooth schemes inducing a projection $\pi : \ocM(Y) \rightarrow \ocM(X)$.  Then
	\begin{equation*}
		\pi_* \left( [\ocM(Y)]^\vir\right) =  [\ocM(X)]^\vir .
	\end{equation*}
\end{theorem}

We will work throughout this paper in the language of logarithmic schemes. Following Ogus, we refer to the underlying scheme of a logarithmic scheme $X$ by decoration with an underline:  $\uX$.  

\subsection{Toroidal structures and logarithmic structures}

	We summarize the relationship between toroidal embeddings and logarithmic structures, giving a brief summary of the basic definitions.  For an authentic introduction we refer the reader to \cite{Kato} or to \cite{handbook}.

 Logarithmic structures are a recent addition to algebraic geometry and Gromov--Witten theory, but logarithmically smooth varieties and logarithmic modifications between them have a concrete classical description in terms of toroidal embeddings and toroidal modifications.

\change{A \emph{toroidal embedding} is an open subset $\uU \subset \uX$ such that, if {$\widehat \uX=\Spec \widehat \cO_{X,x}$}
is the formal completion of $\uX$ at any {closed point $x$}, and $\widehat \uU$ is the preimage of $\uU$ in $\widehat \uX$, then the pair $(\widehat X, \widehat \uU)$ is isomorphic to $(\widehat V, \widehat T)$, where $\widehat V$ and $\widehat T$ are constructed in the same way {at a point $v$ of} a toric variety $V$ with dense torus orbit $T$; in other words, there is an isomorphism $\widehat \cO_{X,x}\to \widehat \cO_{V,v}$ carrying $\cI_{X\smallsetminus U}\widehat \cO_{X,x}$ to $\cI_{V\smallsetminus T}\widehat \cO_{V,v}$ as in~\cite[Definition~II.1.1]{KKMS}.  The toric variety $V$ may depend on the choice of point in $X$ where we perform the completion.}
	\change{Informally,} a toroidal variety $\uX$ locally looks formally, and therefore also \'etale locally, like a toric variety.  Similarly, a dominant morphism $\uY \to \uX$ of varieties with toroidal structures $\uU_Y \subset \uY, \uU_X \subset \uX$ is {\em toroidal} if  it locally looks like a torus equivariant morphism of toric varieties.  This notion of \change{toroidal} morphisms  was introduced in \cite[Definition 1.3]{AK}, but the birational case was already present in \cite[Definition~II.2.1, Theorem~II.2.1$^\ast$, Definition~II.2.3]{KKMS}.  We note that sometimes one indicates the toroidal structure on $\uX$ by specifying the divisor $\uD_X := \uX \setminus \uU$ instead of $\uU$.

	A \emph{logarithmic structure} on a scheme $\uX$ is an \'etale sheaf of monoids $M_X$ and a homomorphism $\varepsilon : M_X \rightarrow \change{\cO_\uX}$, \change{with $\cO_\uX$} given its multiplicative monoidal structure, such that every unit of \change{$\cO_\uX$} is the image of a unique element of $M_X$.  The triple $X = (\uX, M_X, \varepsilon)$ is called a \emph{logarithmic scheme}.  
	
	If $i : \uU \subset \uX$ is an open subset then $M_X = \cO_{\uX} \fpr_{i_\ast \cO_{\uU}} i_\ast \cO_{\uU}^\ast$ (the fiber product taken as sheaves in the \'etale topology) along with the projection to $\cO_{\uX}$ gives a logarithmic structure on $\uX$.  Concretely, the local sections of $M_X$ are the sections of $\cO_{\uX}$ that become invertible when restricted to $\uU$.
	When $\uU \subset \uX$ is a toroidal embedding, the logarithmic scheme $X$ just constructed is fine, saturated, and logarithmically smooth and $\uU$ can be recovered as the open subset of $\uX$ on which the \change{map $\varepsilon : M_X \rightarrow \cO_{\uX}$ is an isomorphism onto $\cO_{\uX}^\ast$}.

	A morphism of logarithmic schemes $f : Y \rightarrow X$ consists of a morphism of the underlying schemes $\uY \rightarrow \uX$, for which we use the same symbol $f$, and a morphism of \'etale sheaves of monoids $f^{-1} M_X \rightarrow M_Y$ such that the diagram
	\begin{equation*} \xymatrix{
			f^{-1} M_X \ar[r]^{\varepsilon} \ar[d] & f^{-1} \cO_X \ar[d] \\
			M_Y \ar[r]^{\varepsilon} & \cO_Y
	} \end{equation*}
	commutes.  It can be seen easily from the definition that a morphism of toroidal varieties $(\uY, \uV) \rightarrow (\uX, \uU)$ {(in which $\uV$ maps to $\uU$)} induces a morphism of logarithmic schemes.  Moreover, a {\emph{toroidal}} morphism $(\uY, \uV) \rightarrow (\uX, \uU)$ such that $\uV \rightarrow \uU$ is \'etale or smooth induces a morphism of logarithmic schemes $Y \rightarrow X$ that has the same property, logarithmically, by \cite[Proposition~(3.4)]{Kato}.

\subsection{Implication for logarithmic Gromov--Witten invariants} 

	If $X$ is a projective logarithmic scheme, $\ocM(X)$ decomposes as a disjoint union of open and closed substacks $\ocM_\Gamma(X)$, each of finite type with a projective coarse moduli space, indexed by combinatorial data $\Gamma$.  This was proved with additional technical hypotheses in \cite{GS, AC}, but those restrictions {were eliminated, using some of the methods developed in this paper, in \cite{logbd}.}  Associated to each datum $\Gamma$ one has \emph{logarithmic Gromov--Witten invariants} of $X$, as defined in \cite{GS, AC}.

\begin{corollary}\label{Cor:invariance}
	Let $Y \to X$ be a logarithmic modification of logarithmically smooth schemes. Then the logarithmic Gromov--Witten invariants  of $X$ and $Y$ {with primary insertions coming from $X$} coincide: given numerical data $\Gamma_X$ on $X$ there is a unique choice of numerical data $\Gamma_Y$ with $h_*\Gamma_Y = \Gamma_X$, such that
	$$\langle \alpha_1\cdots \alpha_n\rangle^X_{\Gamma_X} = \langle h^*\alpha_1\cdots h^*\alpha_n\rangle^Y_{\Gamma_Y},$$
	and for all other choices $\Gamma'_Y$ with $h_*\Gamma'_Y = \Gamma_X$ \change{the invariants vanish.}
	
\end{corollary}

We stress that our result applies only for toroidal morphisms. Suppose $X=\PP^2$ with its toric structure. Our result applies when $Y$ is the toric blowing up of $X$ at a torus fixed point, such as the origin. It does {\em not} apply if $Y$ is the non-toric blowing up of $X$ at any point which is not fixed by the torus. 

On the one hand this result is to be expected: consider the case of a toroidal degeneration $\pi:X \to B$, where $B$ is a curve with toroidal divisor  $D_B = \{b_0\}\in B$ and $D_X = \pi^{-1} D_B$.  Suppose given a birational modification $Y \to X$ such that $Y \to B$ is also toroidal. This implies that over a general point $b\neq b_0$ of $B$ we have $Y_b = X_b$, so they have identical Gromov--Witten invariants. This implies that  the most important Gromov--Witten invariants of $X$ and $Y$, namely those with fiberwise curve classes and global insertions, automatically coincide, whether or not the morphism $Y \to X$ is toroidal.

On the other hand this result may be somewhat surprising. 
There are curve classes on $Y$ which are not present on $X$, for instance an exceptional curve $E$.
The corollary says in particular that all logarithmic Gromov--Witten invariants on $Y$ with curve class $dE$  vanish. In fact in this case the moduli spaces $\ocM_{\Gamma'_Y}(Y)$ are {\em empty}, \change{in dramatic} contrast with \change{conventional} Gromov--Witten invariants.

\subsection{Gromov--Witten invariants and birational invariance}
Algebraic Gromov--Witten invariants are virtual curve counts on a complex projective variety $X$, thus are biregular invariants. The formalism of virtual fundamental class shows that they are  automatically deformation invariant: if $X$ appears as a fiber of a smooth family, then its invariants coincide with the invariants of other smooth fibers. This property is fundamental in Gromov--Witten theory.%

By contrast, the behavior of  Gromov--Witten invariants under a birational transformation $Y \to X$ is generally subtle.  Many have studied this subtlety and found that good behavior can be obtained in special situations.  Here is a non-exhaustive list:
\begin{enumerate}
\item Gathmann \cite[Theorem 2.1]{Gathmann} provided a procedure for calculating the behavior of genus-0 invariants under point blowing up.
\item J. Hu \cite[Theorem 1.2]{Hu} showed the birational invariance of genus $\leq 1$ Gromov--Witten numbers under blowing up a point or a smooth curve, as well as  arbitrary genus invariants when $\dim X \leq 3$.
\item Lai \cite[Theorem 1.4]{Lai} showed the birational invariance in genus 0  if $Y \to X$ is the blowing up of a smooth subvariety $Z$ with convex normal bundle with enough sections, or if $Z$  \change{contains no images of $\PP^1$.}
\item Manolache \cite[Proposition 5.14]{Manolache} showed birational invariance in genus 0 if $Z$ is the transversal intersection of $X$ with a smooth subvariety of an ambient homogeneous space.
\end{enumerate}
A number of authors, including  D.\ Maulik and R.\ Pandharipande \cite{MP} and  J. Hu, T.\ J.\ Li and Y.\ Ruan \cite{HLR}, considered the behavior of invariants under blowing up using the degeneration formula.

Theorem \ref{maintheorem}  shows that logarithmic Gromov--Witten invariants are well-suited to questions of birational invariance. {It would be interesting to understand how Gromov--Witten invariants with primary and descendant insertions from $Y$ behave.  It would also} be interesting to obtain comparison mechanisms between logarithmic and usual invariants similar to the results of \cite{MP}. Such a mechanism should allow a comparison of Gromov--Witten invariants of $X$ and $Y$ even if $Y \to X$ is not a toroidal morphism.

\subsection{Artin fans}

Let $\Log$ denote the stack of logarithmic structures introduced in \cite{Olsson_log}.  As explained in op.\ cit., a logarithmic variety $X$ is logarithmically smooth if and only if  the associated map $\uX \to \Log$ is smooth.
As we show in Section~\ref{Sec:toric-stacks} below
this map factors as $\uX \to \ucX \to \Log$ where  $X \to \cX$ is a strict smooth map and $\cX$ is a ``locally toric stack'', meaning is has an \'etale cover by finitely many stacks of the form $[V/T]$, where $V$ is a toric variety and $T$ its dense torus.
The stack $\cX$ is logarithmically \'etale over a point.
We show in Corollary~\ref{cor:log-mod-str} that the map $Y \to X$ is obtained as the pullback of a toric modification $\cY \to \cX$.  In the local picture, this means that $V' = \cY \times_\cX V$ is a toric variety for the same torus $T$.

We intended to name the stack $\cX$ ``the Olsson fan of $X$", however the name ``the Artin fan of $X$" seems to have stuck, and we will use it here.

The construction of $\cX$ has its origin in unpublished notes on gluing Gromov--Witten invariants by Q.\ Chen and by M.\ Gross. Those notes showed that a solid treatment of Artin fans would require a significant amount of pain.  Having endured it, we hope we have managed \change{to hide} this pain and present in this paper a pleasant theory.

A precursor of Artin fans in a special case was given in \cite{ACFW}.  A further generalization of our treatment here, allowing arbitrarily singular logarithmic schemes, is given in \cite[Section~3.1]{logbd}). Since then Artin fans have taken a life of their own: Ulirsch \cite{Ulirsch_Artinfanstrop} shows that the Berkovich analytification $\cX^\beth$ of the Artin fan of $X$  provides an analytic structure on the tropicalization of $X$. Ranganathan \cite{Ranganathan-super} shows that superabundance of tropical geometry is explained by obstructions to lifting curves from $\cX$ to $X$. In \cite{Ranganathan-toric} Ranganathan uses Artin fans as a tool in giving a toroidal description of the space of logarithmic stable maps of genus 0 in the toric case.

\subsection{Outline of proof}

The structure of the proof is very similar to that of the main theorem in \cite{AMW}.

In Section \ref{Sec:mcx+mcy} we construct moduli stacks of pre-stable logarithmic maps $\fM(\cY)$ and $\fM(\cX)$ with maps $\psi_X:\ocM(X) \to \fM(\cX)$ and $\psi_Y:\ocM(Y) \to \fM(\cY)$ constucted in Section \ref{Sec:cartesian}. We show
\begin{proposition}[\change{see Propositions~\ref{prop:log-etale-over-point} and~\ref{prop:rel-alg-stack}}] \label{Prop:mcx-log-smooth}
The stacks $\fM(\cY)$ and $\fM(\cX)$ are algebraic and \change{are} logarithmically smooth.
\end{proposition}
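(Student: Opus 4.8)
The plan is to establish both algebraicity and logarithmic smoothness of the moduli stacks $\fM(\cX)$ and $\fM(\cY)$ of pre-stable logarithmic maps to the locally toric stacks. Since the two cases are formally identical---$\cX$ and $\cY$ are both logarithmically \'etale locally toric stacks, and $\cY \to \cX$ is a toric modification---it suffices to prove the statement for a general locally toric stack $\cX$, treating $\fM(\cY)$ by the same argument. The overall strategy is to reduce everything to the local toric models $[V/T]$ and to understand $\fM(\cX)$ as a moduli stack over the stack $\Log$ of logarithmic structures, where logarithmic smoothness is detected by smoothness of the underlying map after the logarithmic structure is stripped away, per Olsson's characterization recalled in the excerpt.

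First I would set up $\fM(\cX)$ precisely as the stack whose objects over a logarithmic scheme $S$ are pre-stable logarithmic curves $C \to S$ together with a logarithmic map $C \to \cX$, with no stability imposed on the target side. Because $\cX \to \Log$ is logarithmically \'etale and locally of the toric form $[V/T]$, a logarithmic map $C \to \cX$ is locally equivalent to the data of a map of the underlying coarse curve to the Artin fan together with compatible monoid-homomorphism data encoding the logarithmic structure; this is exactly the kind of combinatorial-plus-geometric description that Olsson's representability results for logarithmic maps handle. I would invoke Olsson's theorem that the stack $\LogSch$-relative maps into a fixed logarithmically smooth (here logarithmically \'etale) target is algebraic, combined with the algebraicity of the stack of pre-stable logarithmic curves, to conclude that $\fM(\cX)$ is algebraic. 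The coproduct structure and the \'etale cover of $\cX$ by the charts $[V/T]$ let me check algebraicity \'etale-locally on the target.

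For logarithmic smoothness, I would use the infinitesimal lifting criterion in the logarithmic category, or equivalently Olsson's criterion that $\fM(\cX)$ is logarithmically smooth if and only if the underlying morphism $\underline{\fM(\cX)} \to \Log$ (or the appropriate map to the stack of curves composed with $\Log$) is smooth. Concretely, given a strict square-zero logarithmic thickening $S_0 \hookrightarrow S$ and a logarithmic map from $S_0$ to $\fM(\cX)$, I must lift the pre-stable logarithmic curve and the logarithmic map to $\cX$ over $S$. Lifting the curve is unobstructed because pre-stable logarithmic curves are logarithmically smooth and their deformation theory is well understood; lifting the map into $\cX$ is where the toric and logarithmically \'etale structure does the work. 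Since $\cX \to \Log$ is logarithmically \'etale, logarithmic maps into $\cX$ have vanishing relative obstruction and unique relative deformations once the induced logarithmic structure on the curve is fixed, so the only genuine deformation-theoretic content comes from the curve itself, which is smooth. Because the excerpt points to Corollary~\ref{cor:log-etale-over-point}, I expect the cleanest route is to prove the sharper statement that $\fM(\cX) \to \Log$, or rather the relevant map whose source is the stack of curves, is itself logarithmically \'etale over a point, from which logarithmic smoothness follows.

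The main obstacle I anticipate is not the formal deformation theory but the careful bookkeeping of logarithmic structures at the nodes and markings of the curve, where the monoid combinatorics of the toric charts $[V/T]$ interact with the logarithmic structure of the pre-stable curve. Ensuring that the local toric description glues to a global statement---and that the \'etale cover of $\cX$ by charts $[V/T]$ induces a compatible cover of the moduli stack without introducing spurious obstructions at the strata where several charts meet---will require the most care. I would handle this by reducing, via the \'etale-local structure of $\cX$, to a single chart $[V/T]$ and there computing deformations explicitly in terms of maps of fine saturated monoids, using that $V' = \cY \times_\cX V$ remains toric for the same torus $T$ to transport the argument verbatim from $\cX$ to $\cY$.
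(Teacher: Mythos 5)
Your high-level plan matches the paper's in outline: the proposition is deduced from the statement that $\fM(\cX)\to\fM$ is relatively algebraic and logarithmically \'etale (this is Corollary~\ref{cor:log-etale-over-point}, obtained by applying Proposition~\ref{prop:rel-alg-stack} to $\cX\to\mathrm{pt}$), and your deformation-theoretic argument --- unobstructedness of pre-stable log curves plus unique lifting of maps into a logarithmically \'etale target, so that logarithmic smoothness is inherited from $\fM$ --- is exactly the final step of the paper's proof.

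The gap is in algebraicity. You ``invoke Olsson's theorem that the stack of maps into a fixed logarithmically smooth target is algebraic,'' but no off-the-shelf theorem of that kind applies here: $\cX$ is an Artin stack of the form $[V/T]$ with positive-dimensional, non-finite stabilizers, so the standard Hom-stack representability results (which need the target to be a scheme, a Deligne--Mumford stack, or an Artin stack with finite or quasi-finite diagonal) and the algebraicity results of Gross--Siebert and Abramovich--Chen (which need a projective target) do not cover this case. This is precisely where the paper does its real work: it first proves that a quasi-compact morphism of Artin fans has \emph{logarithmically quasi-finite diagonal}, then identifies $\Log(\fM(\cX))$ with the pushforward $\pi_\ast F$ of $F = C\fpr_{\Log}\Log(\cX)$ along the proper universal curve $\pi$, invokes Giraud's theorem to show this pushforward commutes with base change because $\Log(\cX)\to\Log$ is \'etale with quasi-finite diagonal, and finally represents the resulting stack by an \'etale groupoid of algebraic spaces, using that \'etale sheaves are representable by \'etale algebraic spaces. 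None of this mechanism appears in your proposal. Relatedly, your reduction of algebraicity to a single chart $[V/T]$ is not straightforward, since a curve mapping to $\cX$ need not factor through any one chart and $\fM(\cX)$ is not glued naively from the $\fM([V/T])$; and the obstacle you single out (bookkeeping of log structures at nodes and markings) is not where the actual difficulty lies.
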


In order to compare the moduli spaces we construct another stack $\fM'(\cY\to \cX)$ in Section~\ref{Sec:M'}, as well as morphisms  $\psi'_Y:\ocM(Y) \to \fM'(\cY\to \cX)$ and $\alpha: \fM'(\cY\to\cX)\to \fM(\cY)$ in Section~\ref{Sec:cartesian}, such that $\psi_Y = \alpha\circ \psi'_Y$.  We show

\begin{proposition}[see Corollary~\ref{Cor:M'-algebraic}, Lemma~\ref{lem:strictness}, and Section~\ref{Sec:M'toM}]\label{Prop:M'} The  stack $\fM'(\cY\to \cX)$ is algebraic and the morphism $\alpha$ is \'etale and strict.
\end{proposition}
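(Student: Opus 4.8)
The plan is to prove Proposition~\ref{Prop:M'} in three pieces matching the three cited references, establishing algebraicity of $\fM'(\cY\to\cX)$ first, then analyzing the morphism $\alpha$ in two stages: strictness and \'etaleness. The overarching strategy, following \cite{AMW}, is that $\fM'(\cY\to\cX)$ should be a moduli stack parametrizing the same logarithmic curves as $\fM(\cY)$ together with \emph{extra combinatorial/logarithmic data} recording how a map to $\cY$ refines a map to $\cX$ through the toric modification $\cY\to\cX$. The forgetful morphism $\alpha$ drops this extra data. Since the underlying stacks of curves are unchanged and the toric modification is logarithmically \'etale, the fibers of $\alpha$ should be discrete and the extra data should impose no deformation-theoretic obstruction, which is the source of \'etaleness.

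First I would establish algebraicity (Corollary~\ref{Cor:M'-algebraic}). The cleanest route is to exhibit $\fM'(\cY\to\cX)$ as an algebraic stack relative to $\fM(\cY)$, which is already known to be algebraic and logarithmically smooth by Proposition~\ref{Prop:mcx-log-smooth}. Concretely, I expect $\fM'(\cY\to\cX)$ to be built by adjoining to each family over $\fM(\cY)$ a choice of lift or subdivision datum governed by the fan of the toric morphism $V'\to V$; because $\cY\to\cX$ is a toric modification with $V'=\cY\times_\cX V$ toric for the same torus $T$, this datum is controlled by a finite, combinatorially explicit piece of information on the tropicalization of the curve. Representability of such a construction over an algebraic base follows from standard Artin-stack criteria once one checks that the relevant functor is limit-preserving and has a good deformation theory; I would invoke the machinery already set up in the earlier sections rather than reprove it.

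Next, for strictness (Lemma~\ref{lem:strictness}), I would compare the logarithmic structures pulled back along $\alpha$. Strictness means the logarithmic structure of $\fM'(\cY\to\cX)$ is the pullback of that of $\fM(\cY)$, equivalently that $\alpha$ induces an isomorphism on characteristic monoid sheaves. Since the extra data distinguishing $\fM'$ from $\fM$ is of a combinatorial, \'etale-local nature (a choice among finitely many cells of a subdivision) and does \emph{not} introduce new monodromy or new generators into the characteristic monoid, the ghost sheaves should agree. I would verify this at the level of geometric points and their infinitesimal neighborhoods, checking that passing from a map to $\cX$ to its refinement through $\cY$ does not alter the monoid $\overline{M}$ because the modification is logarithmically \'etale.

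Finally, for \'etaleness (Section~\ref{Sec:M'toM}), the key input is that $\cY\to\cX$ is logarithmically \'etale, so the refinement data carries no tangent or obstruction space: infinitesimal deformations of a logarithmic map to $\cX$ lift uniquely to deformations of the refined map to $\cY$. Combined with strictness, \'etaleness in the logarithmic sense reduces to \'etaleness of the underlying morphism, which I would check via the infinitesimal lifting criterion---unique existence of lifts against square-zero extensions---using the logarithmic smoothness of both $\fM(\cY)$ and $\fM(\cX)$ from Proposition~\ref{Prop:mcx-log-smooth} together with the unramifiedness coming from the discreteness of the fibers of $\alpha$. The main obstacle I anticipate is the \'etaleness step: one must show that the combinatorial refinement datum is \emph{rigid}, i.e.\ that for a fixed logarithmic map to $\cY$ the lift to $\fM'(\cY\to\cX)$ is unique and unobstructed, which requires a careful tropical analysis of how subdivisions of the cone complex interact with the (fixed) tropicalization of the source curve. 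Getting the deformation theory of this refinement datum to vanish---rather than merely to be finite---is where the logarithmic \'etaleness of $\cY\to\cX$ must be used in an essential way.
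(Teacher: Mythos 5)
Your proposal has the right high-level picture (the extra data recorded by $\fM'(\cY\to\cX)$ over $\fM(\cY)$ is discrete, and logarithmic \'etaleness of $\cY\to\cX$ is what makes it rigid), but at each of the three steps the argument that would actually close the proof is missing or aimed at the wrong target. The most serious issue is the \'etaleness step. For $\alpha$ the infinitesimal lifting problem starts from a deformation $C'$ over $S'$ of the curve $C\to\cY$, and what must be produced is a deformation $\oCp$ of the \emph{contracted} curve $\oC$ together with a map $\oCp\to\cX$ extending $\oC\to\cX$. Your key claim --- that deformations of a map to $\cX$ lift uniquely through $\cY\to\cX$ --- addresses the other leg of the correspondence, $\fM'(\cY\to\cX)\to\fM(\cX)$ (which the paper treats separately), and says nothing about deforming $\oC$. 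The paper's argument uses two inputs you do not mention: since $C\to\oC$ is logarithmically \'etale it contracts chains of semistable components, so the contraction extends uniquely to $C'\to\oCp$ by \cite[Appendix~B]{AMW}; and $\oCp$ is the pushout of $C\to\oC$ and $C\to C'$ in the $2$-category of logarithmic stacks, which is what produces the map $\oCp\to\cX$ uniquely. No ``tropical analysis of subdivisions against the tropicalization of the source curve'' is needed; the rigidity is a statement about contractions of prestable curves, not about fans.

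The other two steps also have gaps. For algebraicity, exhibiting $\fM'(\cY\to\cX)$ as algebraic \emph{relative to $\fM(\cY)$} by ``standard Artin criteria'' is essentially asking for the representability of $\alpha$, which is the content of the \'etaleness you only establish later, and your description of the fiber of $\alpha$ as a choice of cell of a subdivision is not justified (the extra datum is a contraction $C\to\oC$ plus a factorization through $\cX$, not a cell choice). The paper instead works relative to $\fM(\cX)$: fixing $\oC\to\cX$, the fiber is the stack of pre-stable logarithmic maps to $\cY\times_\cX\oC$, which is algebraic because $\cY\times_\cX\oC\to\oC$ is logarithmically \'etale and $\oC$ is locally projective over the base; $\fM'$ is then the open locus where the relative automorphism groups are finite. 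For strictness, agreement ``on characteristic monoids'' cannot be checked pointwise without engaging with how the logarithmic structures on these moduli stacks are actually defined, namely by minimality; the paper shows the two minimality conditions are computed from the same data because $g^\ast M_{\cX}\to M_{\oC}$ is recovered from $f^\ast M_{\cY}\to M_C$ via pushforward along $\tau:C\to\oC$ and the isomorphism $\tau_\ast M_C\simeq M_{\oC}$. Without these specific mechanisms the proposal does not yet constitute a proof.
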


 We construct $\fM'(\cY\to \cX)$ with a morphism $\fM(h): \fM'(\cY\to \cX) \to \fM(\cX)$. We obtain a diagram
\begin{equation} \label{Eq:Costello}
\vcenter{\xymatrix{
\ocM(Y) \ar[rr]^-{\ocM(h)}\ar[d]_{\psi'_Y} && \ocM(X)\ar[d]^{\psi_X}\\
\fM'(\cY \to \cX) \ar[rr]_-{\fM(h)}&&  \fM(\cX)
}}
\end{equation}
 and prove
\begin{proposition}[see Section \ref{Sec:cartesian}]\label{Prop:cartesian}
Diagram~\eqref{Eq:Costello} is cartesian.
\end{proposition}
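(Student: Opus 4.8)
The plan is to verify the cartesian property by comparing functors of points, constructing mutually inverse morphisms between $\ocM(Y)$ and the fiber product $\ocM(X)\times_{\fM(\cX)}\fM'(\cY\to\cX)$ directly on universal families. Fix a fine saturated logarithmic scheme $T$. A $T$-point of the fiber product is a triple consisting of a stable logarithmic map $f\colon C\to X$ over $T$, a $T$-point $\xi$ of $\fM'(\cY\to\cX)$, and a $2$-isomorphism identifying $\psi_X(f)$ with $\fM(h)(\xi)$ in $\fM(\cX)$. From such a triple I would manufacture a stable logarithmic map $C\to Y$, and conversely from a stable logarithmic map to $Y$ I would recover a triple; the bulk of the work is then to check that these assignments are inverse and functorial in $T$.

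For the core construction I would use the Setup identity $Y=X\times_\cX\cY$, in which $X\to\cX$ is strict. Post-composing $f$ with $X\to\cX$ yields the logarithmic map $C\to\cX$ underlying $\psi_X(f)$, while the point $\xi$ supplies a logarithmic map $C\to\cY$ lifting the same map to $\cX$ --- this lift is what the construction of $\fM'$ in Section \ref{Sec:M'} provides, with the $2$-isomorphism guaranteeing that the two induced maps to $\cX$ agree. By the universal property of the fiber product of logarithmic stacks these glue to a single logarithmic map $g\colon C\to Y$ with $h\circ g=f$; strictness of $X\to\cX$ ensures that $Y=X\times_\cX\cY$ carries exactly the logarithmic structure for which $g$ is a genuine logarithmic map, so $g$ is at least pre-stable.

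The main obstacle is to show that $g$ carries the correct minimal (basic) logarithmic structure, so that it defines a point of $\ocM(Y)$ rather than merely a pre-stable map, and that this minimal structure is faithfully transported in both directions. This is precisely the role of the refinement $\fM'$: by Proposition \ref{Prop:M'} the morphism $\alpha\colon\fM'(\cY\to\cX)\to\fM(\cY)$ is \'etale and strict, so the logarithmic structure pulled back to $\fM'$ from $\fM(\cY)$ is unchanged, and $\xi$ therefore records the minimal logarithmic structure appropriate to the finer combinatorics of $\cY$ rather than the coarser one seen through $\cX$. I would combine this with minimality of $f$ on the $X$-side to identify the characteristic monoid of $g$ with the one demanded by the definition of a basic logarithmic map to $Y$. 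Stability of $g$ is then inherited from stability of $f$, since $h$ is proper and birational, and is compatible with $\ocM(h)$ by construction. The remaining verifications --- that passing to the triple $(f,\xi)$ from a stable logarithmic map to $Y$, via composition with $h$ and with $Y\to\cY$, is inverse to the construction above --- are then a matter of unwinding the universal properties, and I expect the only genuinely delicate point throughout to be the identification of minimal logarithmic structures controlled by the strictness of $\alpha$.
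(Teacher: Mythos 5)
Your core construction is the paper's: use $Y = X\times_\cX\cY$ and its universal property to produce $C\to Y$ from the pair of maps $C\to\cY$ and $C\to\oC\to X$ (note that the stable map to $X$ lives on the contracted curve $\oC$, not on $C$). Your treatment of minimality is also essentially the paper's ingredient list, though the paper organizes it more cleanly: Lemma~\ref{lem:strictness} shows the vertical arrows of~\eqref{Eq:Costello} are strict, one then checks that the diagram is cartesian in the category of arbitrary fine saturated logarithmic stacks (where minimality plays no role), and strictness of the vertical arrows forces the underlying diagram of algebraic stacks to be cartesian; no direct comparison of characteristic monoids of minimal objects is required.

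The genuine gap is your final step. The claim that ``stability of $g$ is inherited from stability of $f$, since $h$ is proper and birational'' is false, and it cannot be repaired as stated because it nowhere uses the stability condition that cuts $\fM'(\cY\to\cX)$ out of $\fM(\cY\to\cX)$: if that argument worked, the diagram with $\fM(\cY\to\cX)$ in place of $\fM'(\cY\to\cX)$ would also be cartesian, which it is not. Concretely, a point of the fiber product gives a curve $C$ with a contraction $C\to\oC$; a rational component $E$ of $C$ contracted by $C\to\oC$ maps to a point of $X$, and whether $E$ is also contracted by $C\to Y=X\times_\cX\cY$ is governed by the map $E\to\cY$, not by any property of $h$. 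Such an $E$ with two special points contributes a copy of $\mathbb{G}_m$ to the automorphisms of $C\to Y$ exactly when it contributes one to the automorphisms of the diagram relative to $\oC\to\cX$. The paper supplies the missing argument: setting $G=\Aut(C\to Y)$, $G''=\Aut(\oC\to X)$, and $G'=\ker(G\to G'')$, one identifies $G'$ with the kernel of $\Aut_{\fM'(\cY\to\cX)}\to\Aut_{\fM(\cX)}$; since $G''$ is finite by stability of $f$, finiteness of $G$ is equivalent to finiteness of $G'$, which is precisely the open condition defining $\fM'(\cY\to\cX)$ inside $\fM(\cY\to\cX)$. Without this exchange of stability conditions the two functors of points do not agree.
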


\begin{proposition}[see Proposition \ref{prop:univ}]\label{Prop:Costello}
The morphism $\fM(h)$ is of pure degree 1.
\end{proposition}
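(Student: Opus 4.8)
The plan is to show that $\fM(h)$ is a representable, proper morphism which, on each irreducible component of $\fM(\cX)$ that it dominates, is birational of generic degree one, and that every component of $\fM(\cX)$ is dominated. Granting representability and properness, pure degree $1$ in the sense needed for the pushforward formula then reduces to the identity $\fM(h)_*[\fM'(\cY\to\cX)] = [\fM(\cX)]$ on fundamental cycles, which follows once the generic fibre over each component is a single reduced point. Morally, after composing with the \'etale strict map $\alpha$ of Proposition~\ref{Prop:M'}, the morphism $\fM(h)$ is the map on moduli of prestable logarithmic maps induced by the toric modification $\cY\to\cX$ together with the bookkeeping of the resulting modification of the source curve, and the whole statement is the moduli incarnation of the elementary fact that a proper birational toroidal morphism preserves the fundamental class.

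To make this precise I would first exploit logarithmic smoothness. By Proposition~\ref{Prop:mcx-log-smooth}, $\fM(\cX)$ is logarithmically smooth, hence reduced, and its characteristic sheaf stratifies it by tropical type; its irreducible components correspond to the maximal cones $\sigma$ of the associated cone stack of tropical maps to $\cX$, and the dimension along such a component is read off from $\sigma$. Since $\alpha$ is \'etale and strict and $\fM(\cY)$ is logarithmically smooth, $\fM'(\cY\to\cX)$ is logarithmically smooth as well and carries the analogous description; the point is that over a maximal cone $\sigma$ the subdivision $\cY\to\cX$ induces a complete subdivision $\sigma=\bigcup_i\sigma_i$ into cones of the same dimension. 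The two combinatorial inputs I need are that this subdivision is complete, i.e.\ it covers $\sigma$, and that the relative interior of $\sigma$ meets the relative interior of exactly one $\sigma_i$.

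From here the three required properties follow. Equality of dimensions is immediate, since a subdivision preserves dimension, so $\fM(h)$ is generically finite on each component it dominates. Generic degree one holds because the generic point of the component indexed by $\sigma$ corresponds to a tropical map lying in the relative interior of $\sigma$, which by the second combinatorial input factors through a unique $\sigma_i$; this produces a unique lift to $\fM'(\cY\to\cX)$, reduced of residue degree one because $\alpha$ is \'etale and strict. Properness and the fact that every component is dominated I would obtain from the valuative criterion together with completeness $\sigma=\bigcup_i\sigma_i$: a logarithmic map to $\cX$ over the generic point of a discrete valuation ring lifts, after the source modification recorded by $\fM'(\cY\to\cX)$, uniquely to $\cY$, since the image of the tropical map lands in some $\sigma_i$. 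Assembling these, $\fM(h)_*[\fM'(\cY\to\cX)] = [\fM(\cX)]$, which together with Proposition~\ref{Prop:cartesian} and the compatibility of the obstruction theories across \eqref{Eq:Costello} is exactly the input the pushforward argument requires.

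The main obstacle is the uniformity and the reducedness: one must control the infinitely many components of $\fM(\cX)$ simultaneously and, crucially, verify that the generic fibre of $\fM(h)$ has length exactly one rather than a multiple. This is where the precise construction of $\fM'(\cY\to\cX)$ in Section~\ref{Sec:M'} as the moduli of the canonical, minimal lift is essential, so that no discrete choices or hidden automorphisms inflate the degree; the strictness and \'etaleness of $\alpha$ are what ultimately guarantee that the unique generic lift is counted with multiplicity one.
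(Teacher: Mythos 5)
Your overall strategy --- reduce everything to the generic points of $\fM(\cX)$ and show that the generic fibre of $\fM(h)$ is a single reduced point --- is the right one, but the combinatorial picture you build it on is wrong in two places, and both errors are load-bearing. First, the irreducible components of $\fM(\cX)$ do \emph{not} correspond to maximal cones of a tropical moduli space. Because $\fM(\cX)$ is logarithmically smooth over a point (Corollary~\ref{cor:log-etale-over-point}), every irreducible component contains a dense open substack on which the logarithmic structure is \emph{trivial}; the generic point of each component therefore has tropical type the zero cone, and the strata you index by maximal cones are the deepest, nowhere dense ones. You have the stratification upside down. Second, your ``combinatorial input'' that the relative interior of $\sigma$ meets the relative interior of exactly one cone $\sigma_i$ of the induced subdivision is false for every nontrivial subdivision: if $\sigma$ is top-dimensional, its relative interior contains the relative interiors of \emph{all} the top-dimensional $\sigma_i$. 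So the step ``the generic tropical map factors through a unique $\sigma_i$, hence lifts uniquely'' does not go through as written; it would give many lifts, not one.

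The paper's proof of Proposition~\ref{prop:univ} is essentially what your argument becomes once the first error is corrected: since $\fM(\cY\to\cX)$, $\fM(\cY)$ and $\fM(\cX)$ are all logarithmically smooth, one restricts to the dense open loci where the logarithmic structure is trivial --- where there is only the zero cone, so there is nothing to subdivide. There $C$ and $\oC$ are smooth curves, $C\to\oC$ is a finite branched cover whose stabilization is an isomorphism, hence has degree $1$ and is itself an isomorphism; this identifies the trivial-log locus of $\fM(\cY\to\cX)$ with that of $\fM(\cY)$, and the assignment $\oC\mapsto \oC\times_{\cX}\cY$ provides the inverse section over $\fM(\cX)$. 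Note that no valuative criterion or properness of $\fM(h)$ is invoked: birationality of both legs of the correspondence is what is proved and what feeds into Costello's theorem. If you want to pursue the subdivision picture, the correct assertion is that $\fM(\cY\to\cX)\to\fM(\cX)$ is itself a modification of logarithmically smooth stacks, hence an isomorphism over the trivial-log locus for exactly the reason above --- but that is a reformulation of the paper's argument, not a route around it.
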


We construct obstruction theories $\cE_X$ relative to $\psi_X$ and $\cE_Y$ relative to $\psi_Y$ and prove
\begin{proposition}[see Proposition \ref{prop:obs}]\label{Prop:relative-obstruction}
 We have $$[\ocM(X)]^\vir = (\psi_X)_{\cE_X}^! [\change{\fM(\cX)}],\text{\qquad  \qquad} [\ocM(Y)]^\vir =   \change{(\psi'_Y)_{\cE_Y}^! [\fM'(\cY\to \cX)],} $$ 
 \change{and $\ocM(h)^\ast\cE_X = \cE_Y$. }
\end{proposition}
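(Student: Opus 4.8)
The plan is to identify each side with a virtual pullback in the sense of \cite{Manolache} and then to match the two via functoriality of virtual pullbacks. Recall from \cite{GS, AC} that $[\ocM(X)]^\vir$ is defined by a perfect obstruction theory for the morphism $\ocM(X)\to\mathfrak B$ to the log smooth Artin stack $\mathfrak B$ of prestable log curves, whose underlying stack is smooth, namely $(R\pi_*f^*T^{\log}_X)^\vee$, where $\pi\colon\mathcal C\to\ocM(X)$ and $f\colon\mathcal C\to X$ are the universal curve and map; concretely $[\ocM(X)]^\vir$ is the virtual pullback of $[\mathfrak B]$ along this obstruction theory. The morphism $\psi_X$ factors as $\ocM(X)\xrightarrow{\psi_X}\fM(\cX)\to\mathfrak B$, the second arrow forgetting the map to $\cX$, so it suffices to produce $\cE_X$ for $\psi_X$ compatible with this data.

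First I would construct $\cE_X$. Because $X\to\cX$ is strict and smooth (Section~\ref{Sec:toric-stacks}), the relative log tangent sheaf is the ordinary relative tangent bundle, $T^{\log}_{X/\cX}=T_{X/\cX}$, and the deformation theory of $f$ with the log curve and the log map to $\cX$ held fixed is governed by $f^*T_{X/\cX}$. I would therefore set $\cE_X=(R\pi_*f^*T_{X/\cX})^\vee$, check that it is perfect of amplitude $[-1,0]$ (as $f^*T_{X/\cX}$ is locally free along the fibers of $\pi$), and exhibit the canonical map $\cE_X\to\mathbb L_{\ocM(X)/\fM(\cX)}$ as an obstruction theory; the verification is the standard cotangent complex computation for a stack of maps, the strictness of $X\to\cX$ guaranteeing that no additional logarithmic term appears.

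It remains to compare the two virtual pullbacks. Here the key observation is that $\cX$ is logarithmically \'etale, so $T^{\log}_\cX=0$ and hence $T^{\log}_X=T_{X/\cX}$; the complex underlying $\cE_X$ therefore coincides with the one defining $[\ocM(X)]^\vir$, the only difference being that it is taken relative to $\fM(\cX)$ rather than to $\mathfrak B$. Correspondingly, the morphism $\fM(\cX)\to\mathfrak B$ is logarithmically \'etale (using Proposition~\ref{Prop:mcx-log-smooth}), which furnishes the compatibility datum of \cite{Manolache} for the factorization $\ocM(X)\to\fM(\cX)\to\mathfrak B$ and identifies $[\fM(\cX)]$ with the virtual pullback of $[\mathfrak B]$. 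Functoriality of virtual pullbacks then gives
$$[\ocM(X)]^\vir=(\psi_X)^!_{\cE_X}[\fM(\cX)],$$
and the identical argument, now for the strict smooth morphism $Y\to\cY$, yields the formula for $Y$. I expect the principal difficulty to lie not in the formal application of functoriality but in the logarithmic bookkeeping across the \'etale-but-not-strict morphism $\fM(\cX)\to\mathfrak B$: one must verify that the two obstruction theories genuinely assemble into a compatible triple in the logarithmic category, and that passing from $[\mathfrak B]$ to $[\fM(\cX)]$ preserves the fundamental class despite the underlying stack of $\fM(\cX)$ being possibly singular.
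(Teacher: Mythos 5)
Your identification of the obstruction theory is the right one and matches the paper: since $X\to\cX$ is strict and smooth, deformations of $C\to X$ over a fixed $C'\to\cX$ are governed by $f^*T_{X/\cX}$, and since $\cX$ is logarithmically \'etale over a point this equals $f^*T_X^{\log}$, the sheaf appearing in the obstruction theory of \cite{AC,Chen,GS}. The gap is in the step you yourself flag as "the principal difficulty" and then do not carry out: the comparison across the change of base from the stack of prestable log curves to $\fM(\cX)$. As stated, your route does not quite work, because the morphism $\fM(\cX)\to\mathfrak B$ that you propose to feed into Manolache's functoriality is only \emph{logarithmically} \'etale and is not strict, so there is no ordinary compatible triple of obstruction theories over it, and $[\fM(\cX)]$ is not obviously any virtual pullback of $[\mathfrak B]$; moreover $(R\pi_*f^*T_X^{\log})^\vee$ is not an obstruction theory for $\ocM(X)$ over $\mathfrak B$ itself but over $\Log(\mathfrak B)$, which is where the standard virtual class is actually defined.

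The missing idea, which is the entire content of the paper's proof, is to factor through $\Log$: because $\cX\to\Log$ is \'etale, an infinitesimal lifting problem for $C'\to\cX$ extending $C\to\cX$ has a unique solution once the underlying lifting problem for $C'\to\Log$ is solved. Hence the lifting problems defining the obstruction theory of $\ocM(X)$ over $\fM(\cX)$ (equivalently over $\Log(\fM(\cX))$) and over $\Log(\mathfrak B)$ are \emph{literally the same}, classified in both cases by torsors under $f^*T_{X/\cX}\tensor J=f^*T_X^{\log}\tensor J$. So no functoriality of virtual pullbacks, no compatible triple, and no comparison of fundamental classes between singular bases is needed: the two obstruction theories coincide as functors of square-zero extensions (in the sense of \cite{obs}), and therefore define the same class. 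Equivalently, $\fM(\cX)\to\Log(\mathfrak B)$ is \'etale in the ordinary sense (an open substack of an \'etale map, by Proposition~\ref{prop:rel-alg-stack}), which is the strict statement your "log \'etale but not strict" worry is resolved by. The same argument, run for the diagram \eqref{eqn:6}, handles $\ocM(Y)$ over $\fM'(\cY\to\cX)$, with the additional observation that the torsor of lifts to $Y$ is identified with the torsor of lifts to $X$ via the cartesian square \eqref{eqn:4}, which is what gives part (3) of Proposition~\ref{prop:obs}.
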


Theorem \ref{maintheorem} then follows from  Costello's result \cite[Theorem 5.0.1]{Costello}; see also \cite[Proposition 5.29]{Manolache} and \cite[Proposition 3.15]{Lai}.

\subsection{Conventions}

We work over \change{an algebraically closed field $k$} of characteristic zero.  With one exception (in Proposition~\ref{prop:proper-base-change}) all logarithmic structures in this paper are fine and saturated.%

\subsection{Acknowledgements} \change{We thank Mark Gross who asked the question and offered several suggestions, and Steffen Marcus, with whom some of the techniques used here were developed in~\cite{AMW}. Special thanks are due to Jim Bryan, who pointed out that understanding primary and descendant insertions from $Y$ would require additional work.  We are very grateful as well to Zhi Jin, Martin Ulirsch, and the anonymous referee for their careful reading, comments, questions, and suggestions.}

\section{Construction of $\cX$ and $\cY$}\label{Sec:toric-stacks}

We construct Artin fans only for logarithmically smooth logarithmic schemes.  A more general construction appears in \cite{logbd}.  The general case is also treated in \cite{Ulirsch}, where it is connected to Kato fans, polyhedral complexes and Berkovich analytic spaces.

We construct the stack $\cX$ as a universal object depending on $X$.  First, there is a canonical morphism $X \to \Log$; its image is an open substack of $\Log$, but it is too coarse an object because different strata of $X$ can map to the same point of $\Log$.  The idea is to correct this deficiency in a universal way.  We then construct $\cY$ by repeating the same construction, this time working relative to $\Log(\cX)$.

\subsection{Connected components of the fibers of a smooth morphism}

Let $f : X \rightarrow Y$ be a smooth, quasicompact morphism of schemes.  Let $\pi_0(X/Y)$ be the \'etale $Y$-space defined in \cite[Section~(6.8)]{LMB}.  A point of $\pi_0(X/Y)$ lying above a geometric point $y$ of $Y$ corresponds to a connected component of the fiber $X_y$.  We will generalize the construction of $\pi_0(X/Y)$ to a smooth, quasicompact morphism of algebraic stacks.  

\begin{remark}
This section is closely related to, and overlaps somewhat with, \cite[Section~4.1]{minimal}.  As in loc.\ cit., one could eliminate the smoothness requirement and replace it with local finite presentation, flatness, and reduced geometric fibers, but that generality is not necessary here.
\end{remark}

\begin{proposition} \label{prop:pi_0-univ-prop}
\change{Let $X \rightarrow Y$ be a smooth morphism of schemes.  Then} $\pi_0(X/Y)$ is the initial factorization of $X \rightarrow Y$ through an \'etale $Y$-space.
\end{proposition}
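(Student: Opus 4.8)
The plan is to recast the statement as a representability property: writing $\pi : X \to P := \pi_0(X/Y)$ for the canonical morphism, I want to show that precomposition with $\pi$ induces a bijection $\operatorname{Hom}_Y(P, E) \xrightarrow{\sim} \operatorname{Hom}_Y(X, E)$ for every \'etale $Y$-space $E$. This is exactly the assertion that $P$ is the initial factorization of $X \to Y$ through an \'etale $Y$-space. Two properties of the construction of \cite[Section~(6.8)]{LMB} do all the work, and I would isolate them first: (i) $P \to Y$ is \'etale and its formation commutes with arbitrary base change $Y' \to Y$; and (ii) $\pi$ is smooth and surjective with geometrically connected fibers --- indeed over a geometric point $\bar y$ the fibers of $\pi$ are the connected components of the smooth $\kappa(\bar y)$-scheme $X_{\bar y}$, each of which, being connected and smooth with a rational point, is geometrically connected.

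For injectivity (uniqueness of the factorization), I would note that $\pi$, being smooth and surjective, is an fppf cover and hence an epimorphism of fppf sheaves; since any \'etale $Y$-space $E$ is an fppf sheaf, two $Y$-morphisms $P \to E$ that agree after composition with $\pi$ must coincide. For surjectivity (existence), fix $\phi \in \operatorname{Hom}_Y(X,E)$. Passing to the base change $E_P := E \times_Y P$, which is \'etale over $P$ by (i), the pair $(\phi, \pi)$ defines a $P$-morphism $\phi_P : X \to E_P$, that is, a section of $E_P$ over $X$. By fppf descent for the sheaf $E_P$ along the cover $\pi$, it suffices to check that the two pullbacks $p_1^\ast \phi_P$ and $p_2^\ast \phi_P$ agree as $P$-morphisms $X \times_P X \to E_P$; the descended section is then a genuine section $s : P \to E_P$, and composing with $E_P \to E$ produces the desired $P \to E$ over $Y$.

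The heart of the matter, and the one step that is not purely formal, is the equality $p_1^\ast \phi_P = p_2^\ast \phi_P$. Since $E_P \to P$ is \'etale, hence unramified, its diagonal is an open immersion, so the equalizer of the two maps is an open subscheme $U \subseteq X \times_P X$. I would then show that $U$ is everything by checking that it contains every geometric point: the fiber of $X \times_P X \to P$ over a geometric point $\bar p$ is $X_{\bar p} \times_{\kappa(\bar p)} X_{\bar p}$, which is connected because each factor is geometrically connected by (ii), whereas the fiber of $E_P \to P$ over $\bar p$ is discrete; hence $\phi_P$ is constant on $X_{\bar p}$ and the two pullbacks coincide on this fiber. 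An open subscheme meeting every point is the whole scheme, so $U = X \times_P X$ and the two maps agree. The main obstacle is thus precisely this interplay between the unramifiedness of $E \to Y$, which makes the equalizer open, and the connectivity of the fibers of $\pi$, which forces the two pullbacks to agree pointwise; once both are in place, descent completes the argument, and the same reasoning will later be transported to the setting of algebraic stacks.
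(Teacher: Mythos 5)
Your proof is correct and is essentially the paper's argument in expanded form: your key step, that $p_1^\ast\phi_P = p_2^\ast\phi_P$ on $X \times_{\pi_0(X/Y)} X$, is exactly the paper's assertion that $X \times_{\pi_0(X/Y)} X \subset X \times_Z X$ inside $X \times_Y X$ (with $Z = E$), and both proofs establish it the same way — openness of the agreement locus from \'etaleness/unramifiedness, reduction to geometric points by compatibility with base change, and connectedness of the fibers. The only difference is that you spell out the fppf descent step that the paper compresses into ``it will be sufficient to show,'' and you prove the fiberwise statement directly rather than citing the classical universal property of $\pi_0$ over a field.
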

\begin{proof}
It will be sufficient to show that for any other such factorization $X \rightarrow Z \rightarrow Y$, there is an inclusion $X \fpr_{\pi_0(X/Y)} X \subset X \fpr_Z X$ as open subschemes of $X \fpr_Y X$.  For this it is sufficient to show there is an inclusion on the level of points.  Since everything in sight commutes with base change in $Y$, we may assume $Y$ is the spectrum of a separably closed field.  In this case, the inclusion reduces to the well-known universal property of $\pi_0(X) = \pi_0(X/Y)$.
\end{proof}

As the formation of $\pi_0(X/Y)$ commutes with base change in $Y$, the definition extends to a smooth, quasicompact morphism $f : X \rightarrow Y$ that is representable by schemes.  We show it can be extended to an arbitrary morphism of algebraic stacks.

\change{First,} let $f : X \rightarrow Y$ be a smooth morphism from an algebraic stack to a \change{\emph{scheme} $Y$}.  Regard $\Phi : X' \mapsto \pi_0(X'/Y)$ as a functor from the category of smooth $X$-schemes to the category of \'etale $Y$-spaces.  By the universal property of $\pi_0(X/Y)$, this functor respects colimits where defined.  Therefore it can be extended to the category of all smooth $X$-spaces, and in particular to $X$, by the following formula:
\begin{equation*}
\pi_0(X/Y) = \varinjlim_{\substack{\text{schemes $X'$} \\ \text{$X' \rightarrow X$ smooth}}} \pi_0(X'/Y) 
\end{equation*}
Note that the colimit is taken in the category of \'etale $Y$-spaces, which is equivalent to the category of \'etale sheaves on the small \'etale site of $Y$ \cite[Theorem~V.1.5]{Milne}.  Since colimits of \'etale sheaves exist, so does the colimit defining $\pi_0(X/Y)$, and it is automatically
\'etale over $Y$.

\change{
\begin{corollary} \label{cor:pi_0-univ-prop}
The conclusion of Proposition~\ref{prop:pi_0-univ-prop} is valid for smooth morphisms from algebraic stacks to schemes.
\end{corollary}
\begin{proof}
Suppose that $X \rightarrow Y' \rightarrow Y$ is a factorization of $X \rightarrow Y$ through an \'etale $Y$-scheme $Y'$.  Then for each smooth $X'$ over $X$, we obtain a factorization $X' \rightarrow Y' \rightarrow Y$ of the map $X' \rightarrow Y$.  By the universal property of $\pi_0(X'/Y)$, this factors uniquely as
\begin{equation*}
X' \rightarrow \pi_0(X'/Y) \rightarrow Y' \rightarrow Y
\end{equation*}
The universal property of the colimit used to define $\pi_0(X/Y)$ now gives the required map $\pi_0(X/Y) \rightarrow Y'$.
\end{proof}
}

\begin{proposition}
Let $X \rightarrow Y$ be a smooth morphism \change{of an algebraic stack to a scheme.}  The formation of the \'etale $Y$-space $\pi_0(X/Y)$ commutes with base change in $Y$.
\end{proposition}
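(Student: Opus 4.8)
The plan is to reduce the statement to the representable case, where compatibility with base change is built into the construction, by presenting $\pi_0(X/Y)$ as a colimit and using that base change preserves such colimits. Fix a morphism of schemes $g : Y' \to Y$ and write $X' = X \fpr_Y Y'$. I want to produce a canonical isomorphism $\pi_0(X'/Y') \simeq \pi_0(X/Y) \fpr_Y Y'$ of \'etale $Y'$-spaces.

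First I would choose a smooth surjection $U \to X$ from a scheme and set $R = U \fpr_X U$, so that $X$ is the coequalizer of the groupoid $R \rightrightarrows U$. Since $U \to X$ and $R \to X$ are smooth, both are smooth $X$-spaces, and by construction the colimit-preserving extension $\Phi$ of $X'' \mapsto \pi_0(X''/Y)$ carries this presentation to a coequalizer presentation
$$\pi_0(X/Y) = \operatorname{coeq}\bigl(\pi_0(R/Y) \rightrightarrows \pi_0(U/Y)\bigr)$$
in the category of \'etale $Y$-spaces, i.e. in the small \'etale topos of $Y$. The same discussion applied to the atlas $U' = U \fpr_Y Y' \to X'$, with $R' = U' \fpr_{X'} U' = R \fpr_Y Y'$, gives $\pi_0(X'/Y') = \operatorname{coeq}\bigl(\pi_0(R'/Y') \rightrightarrows \pi_0(U'/Y')\bigr)$.

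Now I would invoke the two ingredients. On the one hand, $U$ and $R$ are smooth over $Y$ and representable, so the base-change property already established for the representable case yields $\pi_0(U'/Y') = \pi_0(U/Y) \fpr_Y Y'$ and $\pi_0(R'/Y') = \pi_0(R/Y) \fpr_Y Y'$, compatibly with the two structure maps. On the other hand, base change along $g$ is the inverse-image functor between small \'etale topoi; being a left adjoint it commutes with all colimits, in particular with the coequalizer above. Combining these,
$$\pi_0(X'/Y') = \operatorname{coeq}\bigl(\pi_0(R/Y)\fpr_Y Y' \rightrightarrows \pi_0(U/Y)\fpr_Y Y'\bigr) = \operatorname{coeq}\bigl(\pi_0(R/Y) \rightrightarrows \pi_0(U/Y)\bigr)\fpr_Y Y' = \pi_0(X/Y)\fpr_Y Y',$$
which is the desired compatibility.

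The main obstacle is bookkeeping at the boundary between schemes, algebraic spaces, and stacks: the fibre product $R = U \fpr_X U$ is in general only an algebraic space, so the established representable base-change statement does not literally apply to it. I would handle this by a short bootstrap — proving the proposition first for $X$ an algebraic space, using a scheme atlas $U \to X$ for which $R$ (or the higher terms of a smooth hypercover of $X$ by schemes) is covered by a scheme, and only then for $X$ an arbitrary algebraic stack. The other point requiring care is that the colimit-preserving extension $\Phi$ really does send the groupoid presentation $R \rightrightarrows U$ of $X$ to the displayed coequalizer; this is precisely the colimit-preservation recorded when $\Phi$ was extended to all smooth $X$-spaces, applied to the expression of $X$ as the coequalizer of $R \rightrightarrows U$.
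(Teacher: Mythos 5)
Your argument is correct and is essentially the paper's: both express $X$ as a colimit of smooth $X$-schemes, apply the representable base-change statement termwise, and use that colimits of \'etale sheaves commute with pullback. The only difference is that the paper takes the colimit over the entire diagram of smooth $X$-schemes rather than over a groupoid presentation $R \rightrightarrows U$, which sidesteps the issue you rightly flag that $R = U \fpr_X U$ is in general only an algebraic space and therefore forces your extra bootstrap step.
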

\begin{proof}
\change{Let $Y' \rightarrow Y$ be a morphism of schemes and let $X'$ be the base change of $X$.}
Choose a presentation of $X$ as a colimit of smooth $X$-schemes $X_i$.  Let $X'_i = X_i \fpr_Y Y'$.  Then $X' = X \fpr_Y Y'$ is the colimit of the smooth $X'$-schemes $X'_i$ and so
\begin{equation*}
\pi_0(X'/Y') = \colim \pi_0(X'_i/Y') = \colim (\pi_0(X_i/Y) \fpr_Y Y') = ( \colim \pi_0(X_i/Y) ) \fpr_Y Y'
\end{equation*}
using the commutation of $\pi_0$ with base change for schemes and the fact that colimits of sheaves commute with pullback.
\end{proof}

The proposition allows us to extend the definition of $\pi_0(X/Y)$ to an arbitrary smooth morphism of algebraic stacks:  Let $Y' \rightarrow Y$ be a smooth cover by a scheme and put $X' = X \fpr_Y Y'$.  Then $\pi_0(X'/Y')$ is an \'etale $Y'$-space and this construction is functorial in the $Y$-scheme $Y'$.  Therefore $\pi_0(X'/Y')$ descends to an \'etale $Y$-space $\pi_0(X/Y)$.

\begin{proposition}
The map $X \rightarrow \pi_0(X/Y)$ has connected fibers.
\end{proposition}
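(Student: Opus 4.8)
The plan is to reduce to the case of a separably closed base field and there identify $\pi_0(X/Y)$ with the set of connected components of $X$. First I would observe that having connected geometric fibers is a property that may be checked after base change along geometric points of the target $\pi_0(X/Y)$. Since every geometric point $\operatorname{Spec}\Omega \to \pi_0(X/Y)$ composes with the structure map to a geometric point $\operatorname{Spec}\Omega \to Y$, and since both the formation of $\pi_0(X/Y)$ and the map $X \to \pi_0(X/Y)$ commute with base change in $Y$ by the preceding proposition, I would replace $Y$ by $\operatorname{Spec}\Omega$. Thus it suffices to treat the case in which $Y = \operatorname{Spec} k$ for $k$ separably closed, and to show that the fiber of $X \to \pi_0(X/k)$ over each point is connected.

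Over $\operatorname{Spec} k$ I would argue that $\pi_0(X/k)$ is the discrete \'etale $k$-space whose points are the connected components of the stack $X$. To see this, choose a smooth surjective presentation $U \to X$ by a scheme, reducing first to the quasi-compact case if necessary. The colimit formula defining $\pi_0(X/k)$, together with the universal property of Proposition~\ref{prop:pi_0-univ-prop} and the fact that $X'\mapsto\pi_0(X'/k)$ respects colimits, exhibits $\pi_0(X/k)$ as the coequalizer of the two maps $\pi_0(U \times_X U / k) \rightrightarrows \pi_0(U/k)$; since $\pi_0(U/k)$ and $\pi_0(U \times_X U/k)$ are the sets of connected components of the schemes $U$ and $U \times_X U$, this coequalizer is exactly the set of connected components of $X$. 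Under this identification the map $X \to \pi_0(X/k)$ sends a point to the component containing it, so the fiber over a given component $C$ is $C$ itself, which is connected and nonempty by definition.

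The step requiring the most care will be the identification of the colimit $\pi_0(X/k)$ with the connected components of $X$, that is, verifying that the partition of $X$ into fibers of $X \to \pi_0(X/k)$ coincides with its partition into connected components. Here I would use that the presentation $U \to X$ is smooth and surjective, hence open: connected components of $U$ map onto connected components of $X$, and the equivalence relation on $\pi_0(U/k)$ generated by $U \times_X U$ glues precisely those components of $U$ lying in a common component of $X$. This guarantees that no component of $X$ is split among several points of $\pi_0(X/k)$ and that distinct components remain separated, which is exactly the assertion that the fibers are connected.
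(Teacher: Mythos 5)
Your proof is correct and follows essentially the same route as the paper's: reduce by base change to the case where $Y$ is the spectrum of a (separably/algebraically) closed field, identify $\pi_0(X/k)$ with the set of connected components of $X$, and conclude from the characteristic property of $\pi_0$. The extra detail you supply --- realizing $\pi_0(X/k)$ as the coequalizer of $\pi_0(U \times_X U/k) \rightrightarrows \pi_0(U/k)$ for a smooth presentation $U \to X$ --- is a legitimate unpacking of the step the paper leaves implicit.
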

\begin{proof}
Since the formation of $\pi_0(X/Y)$ commutes with base change in $Y$, it is sufficient to treat the case where $Y$ is the spectrum of an algebraically closed field.  In that case $\pi_0(X/Y) = \pi_0(X)$ and the assertion is immediate from the definition of~$\pi_0$.
\end{proof}

\subsection{Artin cones}

Suppose that $\sigma$ is a fine, saturated, sharp monoid.  For a logarithmic scheme $(X, M_X)$, define \change{a contravariant functor $\mathcal A_\sigma$ from logarithmic schemes to sets:}
\begin{equation*}
\change{\cA_\sigma(X,M_X)= \Hom\bigl((X,M_X), \cA_\sigma\bigr) :=} \Hom\bigl(\sigma^\vee, \Gamma(X, \oM_X)\bigr),
\end{equation*}
When $\sigma = \bN$ is the monoid of natural numbers, we write $\cA = \cA_\bN$.

\change{We record two key  results of Olsson (recall that, by convention, all logarithmic structures are fine and saturated in this paper):

\begin{proposition} \label{lem:cone-cover}
\begin{enumerate} 
\item 
The functor $\cA_\sigma$ is representable by the logarithmic stack $[V/T]$ where $V$ is the toric variety associated to $\sigma$ and $T$ is its dense torus \cite[Proposition~5.17]{Olsson_log}.
\item   The stacks $\cA_\sigma$, with their natural maps to $\Log$, form a representable \'etale cover \cite[Corollary~5.25 and Remark~5.26]{Olsson}.
\end{enumerate}
\end{proposition}
}

\begin{lemma} \label{lem:stab}
The stabilizer groups of a logarithmic structure over a field is the semidirect product of a finite group and a torus.  In particular, it is affine.
\end{lemma}
\begin{proof}
Let $k$ be a field and let $M$ be a logarithmic structure over $k$.  The automorphisms of $M$ are the semidirect product of the automorphism group of the characteristic monoid $\oM$ and the torus $\Hom(\oM^{\rm gp}, \Gm)$.
\end{proof}

\begin{definition}
A logarithmic algebraic stack isomorphic to $[V/T]$, where $V$ is a toric variety and $T$ is its torus, is called an \emph{Artin cone}.
\end{definition}

\change{%
\subsubsection{Maps of Artin cones}\label{Sec:mapsofcones} Observe that we have $\Gamma(\cA_\sigma, \oM\!_{\cA_\sigma}) = \sigma^\vee$, so that
\begin{equation*}
\Hom(\cA_\sigma, \cA_\tau) = \Hom(\sigma, \tau)
\end{equation*}
for any fine, saturated, sharp monoids $\sigma$ and $\tau$.  In particular, $\Hom(\cA, \cA_\sigma) = \sigma$ and $\Hom(\cA_\sigma, \cA) = \sigma^\vee$.
}

%
%

\begin{definition}\label{Def:atomic}
We call a coherent logarithmic scheme $X$ \emph{atomic} if, when $X$ is stratified by the isomorphism type of the stalks of its characteristic monoid, it has a unique stratum that is closed and connected, and the restriction of the characteristic monoid to this stratum is a constant sheaf.
\end{definition}

\begin{lemma} \label{lem:atomic}
\begin{enumerate}[label=(\roman{*})]
\item Every coherent logarithmic scheme whose strata are locally connected (in particular, every logarithmically smooth logarithmic scheme) has an \'etale cover by atomic logarithmic schemes.
\item If $X$ is an atomic logarithmic scheme and $x$ is a point of the closed stratum of $X$ then $\Gamma(X, \oM_X) \rightarrow \oM_{X,x}$ is an isomorphism.
\end{enumerate}
\end{lemma}
\begin{proof}
For every geometric point $x$ of $X$, there is an \'etale neighborhood $U$ of $x$ such that $M_X$ has a chart by $\oM_{X,x}$, lifting the identity map on $\oM_{X,x}$.  The first assertion is local on $X$, so we can assume this is in fact a global chart.  Removing all components of the closed strata other than the one containing $x$, we can assume that the closed stratum is connected.  The global chart then gives a trivialization of $\oM_X$ over the closed stratum.

For the second claim, observe simply that to give a section of $\oM_X$, we must give a section over each stratum in a way that is compatible with generization.  But every stratum is a generization of the closed stratum, so every section of $\oM_X$ over the closed stratum extends to a global section.  But $\oM_X$ has no monodromy on the closed stratum, so the sections of $\oM_X$ on the closed stratum are the same as the sections at any point.
\end{proof}

\begin{proposition}
An \'etale sheaf on $\cA_\sigma$ is constructible with respect to the stratification of $\cA_\sigma$ associated to its finitely many points\change{, and is constant on each stratum}.
\end{proposition}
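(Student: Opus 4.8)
The plan is to trivialize the problem by passing to the smooth atlas $V \to \cA_\sigma$ and exploiting the orbit structure of the toric variety. Since $\cA_\sigma$ is representable by $[V/T]$, an \'etale sheaf on $\cA_\sigma$ is the same datum as a $T$-equivariant \'etale sheaf on $V$. The finitely many points of $\cA_\sigma$ are exactly the $T$-orbits of $V$, which by the orbit--cone correspondence are indexed by the faces of $\sigma$; the stratification in question has as strata the reduced locally closed substacks $[O/T]$, as $O$ ranges over the finitely many orbits. It therefore suffices to show that the restriction of an arbitrary \'etale sheaf to each stratum $[O/T]$ is locally constant.

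First I would identify each stratum. A single $T$-orbit is homogeneous, $O \cong T/T_O$, where the stabilizer $T_O$ is a subtorus of $T$ and hence connected; consequently $[O/T] \cong BT_O$ is the classifying stack of a connected group. By equivariant descent, the restriction of our sheaf to the stratum is precisely an \'etale sheaf on $BT_O$.

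The key step, and the main obstacle, is to show that every \'etale sheaf on $BG$ for a connected group $G$ is constant, in particular locally constant. For this I would descend along the smooth atlas $\operatorname{Spec} k \to BG$, whose \v{C}ech nerve has terms $G^{\times n}$, all connected. A sheaf on $BG$ is then a set $S$, its value on the atlas, together with gluing data over $G \cong \operatorname{Spec} k \times_{BG} \operatorname{Spec} k$, namely an automorphism of the constant sheaf $\underline S$ satisfying the cocycle condition and restricting to the identity along the unit. Such an automorphism is a section of $\underline{\operatorname{Aut}(S)}$ over $G$, i.e.\ a locally constant map $G \to \operatorname{Aut}(S)$; since $G$ is connected this map is constant, and the unit condition forces the constant value to be the identity. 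Hence the gluing data is trivial and the sheaf is the constant sheaf $\underline S$. The same reasoning applies verbatim to sheaves of abelian groups or of $\Lambda$-modules.

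Combining these steps, the restriction of any \'etale sheaf on $\cA_\sigma$ to each of the finitely many strata $[O/T] \cong BT_O$ is constant, hence locally constant, which is exactly the assertion that the sheaf is constructible with respect to the stratification by points. I expect the only genuinely delicate point to be the connectedness argument on $BT_O$: one must be careful that equivariance rigidifies the a priori nontrivial monodromy of the orbit torus $O$ itself. Indeed a general, non-equivariant \'etale sheaf on $O$ need not be constant (witness a Kummer cover), and it is precisely the passage to $BT_O$, together with the connectedness of the atlas, that kills the monodromy.
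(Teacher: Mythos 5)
Your proof is correct and follows essentially the same route as the paper's: both identify each stratum as the classifying stack of a connected (sub)torus and then observe that étale sheaves on such a stack are constant because the descent/equivariance datum is a locally constant map from a connected group to a discrete automorphism set, hence trivial. The paper states this more tersely (each stratum is $B\mathbb{G}_m^r$ and equivariant étale covers of a point are trivial since $\mathbb{G}_m$ is connected), while you spell out the orbit--cone correspondence and the \v{C}ech descent; the content is the same.
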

\begin{proof} \change{Write $\cA_\sigma = [V/T]$, and stratify $V = \coprod V_i$, with orbits $V_i \simeq T/T_i$. Then $\cA_\sigma$ is stratified as $\cA_\sigma = \coprod [V_i/T] = \coprod \mathrm BT_i$. Since we are working over an algebraically closed field $k$ we have $T_i \simeq \Gm^r$, so
each stratum of $\cA_\sigma$ is isomorphic to $\BGm^r$ for some $r$. It thus suffices to show that all \'etale covers of $\BGm^r$ split.  Indeed, an \'etale cover of  $\BGm^r$ corresponds to a $\Gm^r$-equivariant \'etale cover $\cW \to \Spec k$. This is a scheme of the form $\cW=\coprod_W \Spec k$ with $W$ a finite set. Since  $\Gm^r$ is connected it acts trivially on $W$, hence any $w \in W$ provides an equivariant splitting $\Spec k \to \cW$.}
\end{proof}

\begin{corollary}
Let $F_\sigma$ be the set of faces of $\sigma$, partially ordered by inclusion.  Then the category of \'etale sheaves on $\cA_\sigma$ may be identified with the category of presheaves on $F_\sigma$.
\end{corollary}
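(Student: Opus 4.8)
The plan is to reduce the computation to the combinatorics of a finite topological space, using the preceding proposition to rule out any monodromy or exotic gluing. First I would recall the orbit structure of $\cA_\sigma = [V/T]$. By the orbit--cone correspondence the $T$-orbits of the affine toric variety $V$ are indexed by the faces of $\sigma$, the dense orbit corresponding to the minimal face $\{0\}$ and the smaller orbits to the larger faces; passing to the quotient, each orbit becomes a single point of $\cA_\sigma$ whose residual gerbe is one of the strata $\BGm^r$ of the preceding proposition. Thus the finitely many points of $\cA_\sigma$ are canonically indexed by $F_\sigma$, and the orbit closure relation translates into the statement that the point indexed by $\tau$ lies in the closure of the point indexed by $\tau'$ exactly when $\tau' \subseteq \tau$. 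Equivalently, the open substacks of $\cA_\sigma$ are precisely those supported on the down-closed subsets of $F_\sigma$, so the lattice of open substacks of $\cA_\sigma$ coincides with the lattice of opens of the finite $T_0$ space whose underlying set is $F_\sigma$, topologised so that the open sets are the down-sets.

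Next I would use the preceding proposition to identify the category of \'etale sheaves on the stack with the category of sheaves on this finite space. By that proposition every \'etale sheaf $\mathcal F$ on $\cA_\sigma$ is constructible for the stratification by points, and by the argument in its proof the restriction of $\mathcal F$ to each stratum $\BGm^r$ is constant; since a constant sheaf on the connected stratum $\BGm^r$ has global sections equal to its defining set, $\mathcal F$ is determined by its stalk $\mathcal F_\tau$ at each face $\tau$ together with the restriction maps relating the stalks of comparable faces, and a morphism of such sheaves is the same as a compatible family of maps of stalks. This is exactly the data carried by the restriction of $\mathcal F$ to the minimal open neighbourhoods $U_\tau$ of the points, and it realises the category of \'etale sheaves on $\cA_\sigma$ as the category of sheaves on the finite space of the first paragraph.

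Finally I would invoke the standard identification of sheaves on a finite poset, topologised by its down-sets, with presheaves on that poset: a sheaf $\mathcal F$ corresponds to the assignment $\tau \mapsto \mathcal F(U_\tau) = \mathcal F_\tau$, and for $\tau' \subseteq \tau$ the inclusion $U_{\tau'} \subseteq U_\tau$ induces a restriction $\mathcal F_\tau \to \mathcal F_{\tau'}$, which is precisely a contravariant functor on $F_\sigma$. Composing this with the identification of the previous paragraph yields the asserted equivalence.

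The step I expect to require the most care is the second one: verifying that passing from the \'etale topos of the stack $\cA_\sigma$ to sheaves on its finite space of points is an equivalence, and not merely a faithful functor. This is exactly where the preceding proposition does the essential work, forcing every sheaf to be constant along each $\BGm^r$ and thereby collapsing the possible gluing data between strata to bare specialisation maps with no residual automorphisms. I would also take care to pin down the variance---presheaves rather than covariant functors---through the orbit--cone dictionary: larger faces index deeper strata, so the specialisation maps run from the stalk at a larger face to the stalk at a smaller one, producing a contravariant, i.e.\ presheaf, structure on $F_\sigma$.
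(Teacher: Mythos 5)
Your proposal is correct and follows essentially the same route as the paper's proof: identify the faces of $\sigma$ with the strata of $\cA_\sigma$ so that inclusion of faces matches the specialization/generization order, use the preceding proposition to know every \'etale sheaf is constructible (and constant on each stratum $\BGm^r$), and then invoke the standard description of such sheaves as presheaves on the poset of strata. The paper compresses all of this into three sentences, while you spell out the orbit--cone dictionary, the finite-poset topology, and the variance check explicitly; the extra care about why no monodromy or gluing data survives on $\BGm^r$ is exactly the content the paper delegates to the previous proposition.
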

\begin{proof}
We may identify the elements of $F_\sigma$ with the strata of $\cA_\sigma$.  Under this identification, inclusion of faces corresponds to specialization.
We now apply the standard description of sheaves that are constructible with respect to a fixed stratification \change{(see \cite[Th\'eor\`eme~9.5.4]{sga4-IV} for the case of two strata; the general case is an immediate induction)}.
\end{proof}

\begin{corollary} \label{cor:sect-over-Artin-cone}
Let $z$ be the unique closed point of $\cA_\sigma$.  If $\cY \rightarrow \cA_\sigma$ is \'etale and representable then the restriction map
\begin{equation*}
\Gamma(\cA_\sigma, \cY) \rightarrow \Gamma(z, \cY)
\end{equation*}
is a bijection.
\end{corollary}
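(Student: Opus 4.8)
The plan is to reduce the statement to a purely combinatorial fact about the poset $F_\sigma$ by invoking the preceding corollary, which identifies the category of \'etale sheaves on $\cA_\sigma$ with the category of presheaves on $F_\sigma$. Since $\cY \to \cA_\sigma$ is representable and \'etale it is an object of the small \'etale site of $\cA_\sigma$, hence corresponds under this equivalence to a presheaf $P$ on $F_\sigma$, where $P(\tau)$ is the stalk of $\cY$ along the stratum indexed by the face $\tau$. My goal is then to read off both sides of the asserted bijection from $P$ and observe that everything is controlled by the largest face.

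First I would identify the right-hand side. The closed point $z$ is the deepest stratum, which under the dictionary ``inclusion of faces $=$ generization'' is the stratum indexed by the maximal face $\sigma \in F_\sigma$ (the whole cone, which contains every other face). Because this stratum is isomorphic to $\BGm^r$ and every \'etale cover of $\BGm^r$ splits over our algebraically closed field, restriction identifies $\Gamma(z,\cY) = P(\sigma)$. Next I would identify the left-hand side: a global section of $\cY$ restricts over each stratum to an element of the corresponding stalk (the stratum is connected and the sheaf is constant on it), and these elements must be compatible under the generization maps $P(\tau') \to P(\tau)$ for $\tau \le \tau'$. Thus $\Gamma(\cA_\sigma,\cY)$ is the set of compatible families $(s_\tau)_{\tau \in F_\sigma}$, i.e.\ the limit of $P$ over $F_\sigma$, and the map of the corollary is evaluation at $\sigma$.

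The crux is then formal. Since $\sigma$ is the maximum of $F_\sigma$, there is a map $P(\sigma) \to P(\tau)$ for every $\tau$, so I can construct an inverse to evaluation at $\sigma$: given $s_\sigma \in P(\sigma)$, set $s_\tau$ to be the image of $s_\sigma$ under $P(\sigma) \to P(\tau)$. Functoriality of $P$ (the factorization $P(\sigma) \to P(\tau') \to P(\tau)$ for $\tau \le \tau' \le \sigma$) makes $(s_\tau)$ a compatible family, and it is the unique such family with the prescribed value at $\sigma$. Hence restriction to $z$ is a bijection.

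The main obstacle I anticipate is the honest justification that $\Gamma(\cA_\sigma, \cY)$ is exactly this limit, that is, that a global section assembles freely from compatible stalk data with no higher gluing obstruction. I would dispatch this using precisely the inputs of the previous two results: each stratum is $\BGm^r$, over which every \'etale cover splits and the connected group $\Gm^r$ acts trivially on the resulting discrete set, so the restriction of $\cY$ to each stratum is constant with no monodromy and is recovered from the poset data alone. Granting this, the collapse of the limit to the value at the top face is the formal observation above, and the restriction map is the desired bijection.
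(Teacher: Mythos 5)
Your proof is correct and follows the same route as the paper: both translate $\cY$ into a presheaf on $F_\sigma$ via the preceding corollary, identify $\Gamma(\cA_\sigma,\cY)$ with maps from the terminal (constant singleton) presheaf, i.e.\ with the limit of compatible stalk data, and observe that this limit collapses to the value at the face corresponding to the closed stratum. Your write-up just makes explicit the steps the paper compresses into two sentences (including the splitting over each $\BGm^r$ stratum, which the paper delegates to the earlier proposition).
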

\begin{proof}
Under the identification from the last corollary, $\cA_\sigma$ itself corresponds to the presheaf with constant value a singleton.  Therefore $\Gamma(\cA_\sigma, \cY)$ is determined by its value on the initial object of the category $F_\sigma$, which corresponds to the closed stratum of $\cA_\sigma$.
\end{proof}

\subsection{Artin fans}

\change{
\begin{lemma} \label{lem:fan-cover}
If $\cX$ is an algebraic stack that is representable and \'etale over $\Log$ then $\cX$ has a strict \'etale cover by Artin cones.
\end{lemma}
\begin{proof}
Let $x$ be a point of $\cX$.  By  Proposition~\ref{lem:cone-cover} there exists an Artin cone $\cA_\sigma$ and a map $\cA_\sigma \rightarrow \Log$ that takes the closed point $z$ of $\cA_\sigma$ to the image of $x$.  Let $\cY$ be the base change of $\cX \rightarrow \Log$ to $\cA_\sigma$.  Then $\cY \rightarrow \cA_\sigma$ is \'etale and representable and contains a point $y$ of $\cY$ lying over $x$ in $\cX$ and over $z$ in $\cA_\sigma$.  By Corollary~\ref{cor:sect-over-Artin-cone}, there is a section of $\cY$ over $\cA_\sigma$ passing through $y$.  The image of this section in $\cX$ is an \'etale map whose image contains $x$.  Therefore $\cX$ has a strict \'etale cover by Artin cones.
\end{proof}
}

\begin{definition} \label{def:Artin-fan}
\begin{enumerate}
\item 
We will say that a logarithmic algebraic stack is an \emph{Artin fan} if it has a strict, representable, \'etale cover by Artin cones.
\item 
Let $X$ be a logarithmically smooth logarithmic scheme.  Then the tautological map $X \rightarrow \Log$ is smooth.  Let $\cX = \pi_0(X/\Log)$.  We call $\cX$ the \emph{Artin fan} of~$X$.%
\end{enumerate}
\end{definition}

\begin{remark}
Since Artin cones are \'etale over $\Log$, so are Artin fans.  As Artin cones are representable by algebraic spaces over $\Log$, every strict \'etale morphism from an Artin cone to an Artin fan is representable by algbraic spaces.  In fact, it follows from this observation and Proposition~\ref{prop:factorization}, below, that all morphisms from Artin cones to Artin fans are representable by algebraic spaces.
\end{remark}

\begin{remark}
The Artin fan of a logarithmic scheme $\cX$ is representable over $\Log$ so by Lemma~\ref{lem:fan-cover}, it is an Artin fan in the sense of the first part of the definition.
\end{remark}

\begin{remark}
In \cite{logbd} {we  give} a more general construction of Artin fans for logarithmic schemes that are not necessarily logarithmically smooth.  While these satisfy the same universal property as the Artin fans introduced here, the Artin fan of a general logarithmic scheme $X$ cannot be interpreted in general as $\pi_0(X/\Log)$.  In fact, the morphism from $X$ to its Artin fan need not even be surjective.
\end{remark}

\change{
\begin{lemma} \label{lem:open-point}
Let $\cX$ be a connected Artin fan.  Then $\cX$ has a unique open point, up to isomorphism.
\end{lemma}
\begin{proof}
As $\cX$ is connected, any two open points can be linked by a chain of generizations and specializations.  If $u$ and $v$ are open points of $\mathcal X$ with a common specialization $z$, then  the images of $u$ and $v$ in $\Log$ would be open points with a common specialization.  But $\cX \rightarrow \Log$ is \'etale, and $\Log$ has a unique open point, so $u$ and $v$ have the same image in $\Log$.  Generizations lift uniquely under \'etale maps, so this implies $u = v$.
\end{proof}
}
\change{The Artin fan of an atomic (Definition \ref{Def:atomic}) logarithmically smooth scheme is an Artin cone:}
\change{
\begin{lemma} \label{lem:atomic-fan}
Let $X$ be an \emph{atomic} logarithmically smooth logarithmic scheme and let $\cX$ be its Artin fan.  The natural map $\cX \rightarrow \cA_{\Gamma(X, \oM)^\vee}$ is an isomorphism.
\end{lemma}
\begin{proof}
Let $\cY = \cA_{\Gamma(X,\oM)^\vee}$.  The map $X \rightarrow \cY$ factors uniquely through $\cX$ by the universal property of $\cX$.  The fibers of the map $X \rightarrow \cX$ are precisely the connected components of the stratification of $X$ by isomorphism type of the characteristic monoid.  The closed stratum of $X$ is connected by assumption, so $\cX$ has a unique closed point.  This maps to the unique closed point of $\cY$.  Therefore, by Corollary~\ref{cor:sect-over-Artin-cone}, there is a unique section of $\cX$ over $\cY$.  Let $\cX'$ be the image of this section.  This is an open substack of $\cX$ containing the closed point so its preimage in $X$ is an open subscheme containing the closed stratum.  This means the preimage is all of $X$, and as $X \rightarrow \cX$ is surjective, this means the section $\cY \rightarrow \cX$ is surjective, and is therefore an isomorphism.
\end{proof}
}

\begin{lemma}\label{lem:qcpt-qsep}
\change{%
\begin{enumerate}[label=(\roman{*})]
\item An Artin fan is quasicompact if and only if it has finitely many points.  
\item An Artin fan that is representable over $\Log$ is quasiseparated.
\end{enumerate}
}
\end{lemma}
\begin{proof}
\change{It is immediate that an Artin fan with finitely many points is quasicompact.  Conversely, a quasicompact Artin fan has an \'etale cover by finitely many Artin cones, and an Artin cone has only finitely many points.  This proves the first assertion.

Now suppose that $\cX$ is an Artin fan whose canonical projection to $\Log$ is representable.  We wish to show that the diagonal of $\cX$ is quasicompact.  This is a local assertion on $\cX \times \cX$, and $\cX$ has an \'etale cover by Artin cones, so it is sufficient to show that $\cA_\sigma \mathbin\times_{\cX} \cA_\tau \rightarrow \cA_\sigma \times \cA_\tau$ is quasicompact when $\cA_\sigma$ and $\cA_\tau$ are \'etale over $\cX$.  We note that $\cA_\sigma$ is quasiseparated since it can be presented as $[V/T]$ where $V$ is a toric variety and $T$ is its dense torus.  Therefore it suffices to demonstrate that $\cA_\sigma \mathbin\times_{\cX} \cA_\tau$ is quasicompact.

We argue that $\cA_\sigma \mathbin\times_{\Log} \cA_\tau$ has finitely many points.  Indeed, the fiber over a geometric point of $\cA_\tau$, of which there are only finitely many, corresponds to the fiber of $\cA_\sigma$ over a geometric point of $\Log$.  But if $s$ is the spectrum of an algebraically closed field, a morphism $s \rightarrow \Log$ is a logarithmic structure $M_s$ on $s$, and the lifts to $\cA_\sigma$ correspond to homomorphisms $\sigma^\vee \rightarrow \oM_s$ that can be lifted to charts.  These are in bijection with isomorphisms between $\oM_s$ and $\rho^\vee$ for a face $\rho$ of $\sigma$, and there are only finitely many of these.
}
\end{proof}

\begin{corollary}\label{cor:artin-fan-qcqs}
The Artin fan of a quasicompact, logarithmically smooth logarithmic scheme is quasicompact and quasiseparated.
\end{corollary}
\begin{proof}
\change{%
If $\cX$ is the Artin fan of a quasicompact, logarithmically smooth logarithmic scheme $X$ then $X \rightarrow \cX$ is surjective, so $\cX$ is quasicompact.  By definition, $\cX$ is representable over $\Log$, so it is quasiseparated as well.%
}
\end{proof}

\change{
\begin{remark}\label{Rem:no-cover}
It is not true that every algebraic stack that is strict and \'etale over $\Log$ has an \'etale cover by Artin cones.  For example, let $\Gm$ act on $\bA^1$ by $t . x = t^2 x$.  The quotient $\cY$ is \'etale over $\Log$ when given its natural logarithmic struture.  The map $\cY \rightarrow \Log$ factors through $\cX := \cA$, the Artin fan of $\cY$, as a $\mu_2$-gerbe.  This gerbe is nontrivial because it is the stack of square roots of the tautological bundle on $\cX$, and $\Pic(\cX)  = \mathbf Z$.  Therefore it has no section.

On the other hand, if $\cA_\sigma \rightarrow \cY$ were a strict \'etale map whose image contains the closed point then $\cA_\sigma \rightarrow \cX$ would be a strict \'etale cover.  But the only strict \'etale cover of $\cA$ by an Artin cone is the identity, so if $\cY$ had a cover by an Artin cone, the cover would be a section of $\cY \rightarrow \cX$, and we have just seen there is no such section.
\end{remark}
}

\change{%
\begin{proposition} \label{prop:factorization}
Let $\cX$ be an Artin fan and let $\cA_\sigma \rightarrow \cX$ be a morphism of Artin fans.  Then there is an initial example of a strict \'etale map $\cA_\tau \rightarrow \cX$ and a factorization of the morphism $\cA_\sigma\to \cX$ through a morphism $u:\cA_\sigma \to \cA_\tau$.
\end{proposition}
\begin{proof}
Pick a strict, \'etale map $\cA_\tau \rightarrow \cX$ whose image contains the image of the closed point of $\cA_\sigma$.  Pulling back to $\cA_\sigma$, we get an \'etale cover $\cY$ of $\cA_\sigma$, which necessarily has a section by extending any section over the closed point.  We can then replace $\tau$ with the its smallest face that contains $\sigma$.

Now we argue that the factorization is initial.  Suppose that $\cA_\sigma \rightarrow \cA_{\tau'} \rightarrow \cX$ is another factorization.  Then we obtain a commutative square of solid arrows:
\begin{equation*} \xymatrix{
\cA_\sigma \ar[r]^{\gamma} \ar[d]_{{u}} & \cA_{\tau'} \ar[d] \\
\cA_\tau \ar[r] \ar@{-->}[ur]^-\phi & \cX
} \end{equation*}
We seek a dashed arrow completing the diagram.  As the closed point of $\cA_\sigma$ maps to the closed point of $\cA_\tau$ by assumption, the strict map $\cA_{\tau'} \rightarrow \cX$ covers the image of $\cA_\tau$.  Therefore $\varpi:\cA_{\tau'} \fpr_{\cX} \cA_{\tau} \rightarrow \cA_{\tau}$ is an \'etale cover with a given section $\xi=(\gamma,u)$ over $\cA_\sigma$.  

Note that the composite morphism $\cA_{\tau'} \to \cX \to \Log$ is the tautological morphism, which is representable (Lemma \ref{lem:cone-cover}). Hence $\cA_{\tau'} \to \cX$ is necessarily representable.

The closed point $p_\sigma \in \cA_\sigma$ maps to the closed point $u(p_\sigma) = p_\tau \in \cA_\tau$. It follows that the point $\xi(p_\sigma)$ lies over $p_\tau$,  hence by  Corollary~\ref{cor:sect-over-Artin-cone} there is a unique section $\varsigma= (\phi,id)$ of $\varpi$ sending $p_\tau \mapsto \xi(p_\sigma)$. Moreover, the pullback $\varsigma\circ u$ of $\varsigma$ to $\cA_\sigma$ is determined by its restriction to $p_\sigma$, and thus coincides with $\xi$. Therefore $\phi$ extends $\gamma$, as needed.
\end{proof}
}

\change{
\begin{corollary} \label{Cor:barycenter}
Let $\cY$ be an Artin fan, $\phi: \cA_\tau \rightarrow \cY$ a strict \'etale map and $b:\cA \to \cA_\tau$ its barycenter. Then $\Aut_{\cY}(\phi) = \Aut_{\cY}(\phi b)$.
\end{corollary}
\begin{proof} 
We have:
\begin{equation*}
\Aut_{\cY}(\phi) = \left\{\vcenter{\xymatrix{& \cA_\tau \ar[d]^\phi \\ \cA_\tau \ar@{-->}[ur] \ar[r]_\phi & \cY}} \right\} = \left\{\vcenter{\xymatrix{\cA \ar[r]^b \ar[d]_b & \cA_\tau \ar[d]^\phi \\ \cA_\tau \ar@{-->}[ur] \ar[r]_\phi & \cY}} \right\} = \left\{\vcenter{\xymatrix{\cA \ar[r]^b \ar[d]_b & \cA_\tau \ar[d]^\phi \\ \cA_\tau \ar[r]_\phi & \cY}}\right\} = \Aut_{\cY}(\phi b)
\end{equation*}
In the first equality, we identify $\Aut_{\cY}(\phi)$ as the set of dashed arrows making the triangle $2$-commute.  For the second equality, we observe that every automorphism of $\tau$ commutes with the inclusion of the barycenter.  The third equality is Proposition~\ref{prop:factorization}.  And the fourth equality is the identification of $\Aut_{\cY}(\phi b)$ with the set of $2$-commutative squares as implied by the diagram.
\end{proof}
}

\change{
\begin{example}
Consider the open substack $\cY$ of $\Log$ parameterizing those logarithmic structures that admit charts by $\mathbf N^2$.  This is the Artin fan of the Whitney umbrella \cite[Section 4.6.2]{logsurv}.  For any pair of natural numbers $(a,b)$ we have a map $\mathbf N^2 \rightarrow \mathbf N$.  Viewing $\mathbf N$ as the set of global sections of the characteristic monoid of $\cA$, this induces a logarithmic structure $M(a,b)$ on $\cA$ with a chart by $\mathbf N^2$.

If $a = b = 0$ then the $\cA \rightarrow \cY$ factors through the open point.  If one of $a$ or $b$ is zero but the other is not then $\cA \rightarrow \cY$ is strict, hence is its own initial factorization.  If $a \neq b$ and neither is zero then we have two factorizations $\cA \rightarrow \cA^2$ corresponding to the two maps $(a,b), (b,a) : \mathbf N^2 \rightarrow \mathbf N$.  There is a unique isomorphism $\gamma : \cA^2 \rightarrow \cA^2$ (induced from the automorphism $\mathbf N^2 \rightarrow \mathbf N^2$ sending $(x,y)$ to $(y,x)$) making the diagram
\begin{equation*} \xymatrix{
\cA \ar[r]^{(a,b)} \ar[d]_{(b,a)} & \cA^2 \ar[d] \\
\cA^2 \ar[r] \ar[ur]^{\gamma} &  \cY\ar@{}[r]|{\displaystyle\subset} & \Log
} \end{equation*}
commute.

The final case is where $a = b \neq 0$.  In this case we have a single map $\cA \xrightarrow{(a,a)} \cA^2$ inducing $M(a,a)$ by pullback.  However, a commutative square of solid lines
\begin{equation*} \xymatrix{
\cA \ar[r]^{(a,a)} \ar[d]_{(a,a)} & \cA^2 \ar[d] \\
\cA^2 \ar[r] \ar@{-->}[ur]^{\gamma} &  \cY\ar@{}[r]|{\displaystyle\subset} & \Log
} \end{equation*}
involves the specification of an isomorphism between $M(a,a)$ and itself, commuting with the map to $\mathbf N$.  There are two choices of this isomorphism, the identity and the map that exchanges the generators.  Each of these is induced from a unique dashed arrow making the diagram commute.
\end{example}
}

\subsection{Proper morphisms of Artin fans}

We note that the formation of $\cA_\sigma$ is functorial in $\sigma$.  In particular, if $\sigma$ is a face of $\tau$ then $\cA_\sigma \rightarrow \cA_\tau$ is an open embedding.  If $\Sigma$ is a fan then we can define $\cA_\Sigma$ by gluing the $\cA_\sigma$ associated to the $\sigma \in \Sigma$ along their common faces.  If $V$ is the toric variety associated to $\Sigma$ then $\cA_\Sigma \simeq [V/T]$ where $T$ is the dense torus of $V$.

We observe that $\Hom(\cA, \cA_\Sigma) = | \Sigma |$, where, by definition, $| \Sigma | = \bigcup_{\sigma \in \Sigma} \sigma$.

If a fan
$\Sigma$ is a subdivision of $\tau$ then we call the map $\cA_\Sigma \rightarrow \cA_\tau$ a subdivision.  If $\cY \rightarrow \cX$ is a morphism of Artin fans such that $\cA_\tau \fpr_{\cX} \cY \rightarrow \cA_\tau$ is a subdivision for every strict logarithmic map $\cA_\tau \rightarrow \cX$ then we call $\cY$ a subdivision of $\cX$.

\begin{theorem} \label{thm:proper-fans}
Let $f : \cY \rightarrow \cX$ be a representable morphism of \change{connected Artin fans}.
The following are equivalent:
\begin{enumerate}[label=(\roman{*})]
\item \label{proper:1} $f$ is proper.
\item \label{proper:2} For any strict logarithmic map $\cA_\sigma \rightarrow \cX$, the map $\cY \fpr_{\cX} \cA_\sigma \rightarrow \cA_\sigma$ is proper.
\item \label{proper:3} For any strict logarithmic map $\cA_\sigma \rightarrow \cX$, the map $\cY \fpr_{\cX} \cA_\sigma \rightarrow \cA_\sigma$ is a subdivision.
\item \label{proper:4} $f$ is a subdivision.
\item \label{proper:5} \change{Every map $\cA \rightarrow \cX$ lifts uniquely along $f$:}
any commutative diagram of logarithmic morphisms
\begin{equation} \label{eqn:7} \vcenter{\xymatrix{
& \cY \ar[d]^f \\
\cA \ar[r] \ar@{-->}[ur] & \cX
}} \end{equation}
admits a unique completion by a dashed arrow.
\end{enumerate}
\end{theorem}
\begin{proof}
The equivalence of \ref{proper:3} and \ref{proper:4} is the definition.  The equivalence of \ref{proper:1} and \ref{proper:2} follows by \'etale descent from the fact that the strict logarithmic maps $\cA_\sigma \rightarrow \cX$ form an \'etale cover.

\change{%
For the equivalence of \ref{proper:2} and \ref{proper:3}, it is sufficient to assume \change{$\cX = \cA_\sigma$}. 
Present $\cA_\sigma$ as $[V/T]$, where $V$ is a toric variety and $T$ is its dense torus.  Pulling back via $V \rightarrow \cA_\sigma$, we obtain a map $W \rightarrow V$ whose properness is equivalent to that of $\cY$ over $\cX$, as well as an action of $T$ on $W$ whose quotient is $\cY$.  But $\cY$ has a unique open point by Lemma~\ref{lem:open-point}, which corresponds to a dense orbit of $T$.  Therefore $\cY \rightarrow \cX$ is proper if and only if $W \rightarrow V$ is a proper morphism of toric varieties with the same dense torus. 
By \cite[\S I.2 Theorem 8]{KKMS} or \cite[Section~2.4, Proposition]{Fulton} this is equivalent to the condition that the fan of $W$ subdivides $\sigma$.
}

We check that \ref{proper:3} {implies} \ref{proper:5}.  As any map $\cA \rightarrow \cX$ factors through some strict $\cA_\sigma \rightarrow \cX$ \change{(see Proposition~\ref{prop:factorization})},  it is sufficient to assume that $\cX = \cA_\sigma$.  \change{We can thus assume  $\cY = \cA_\Sigma$ for some subivision $\Sigma$ of $\sigma$.  Consider a diagram of solid lines,~\eqref{eqn:7}.  The component of $\cY$ containing the image of $U$ is isomorphic to $\cA_\Sigma$ for some subdivision $\Sigma$ of $\sigma$.  A lift completing the upper triangle in~\eqref{eqn:7} amounts to giving a map $\cA \rightarrow \cA_\Sigma$, which as we observed above, is equivalent to giving a point of $|\Sigma|$.  But $|\Sigma| \rightarrow |\sigma|$ is a bijection, by definition of a subdivision, so there is a unique such completion making the  \change{triangle} commute.}

\change{%
Now we show that \ref{proper:5} implies \ref{proper:3}.  Suppose that $\cY \rightarrow \cX$ satisfies the lifting criterion of~\ref{proper:5}.  We assume without loss of generality that $\cX = \cA_\sigma$. 
Let $\cA_\tau \rightarrow \cY$ be a strict map.  We argue that the map $\tau \rightarrow \sigma$ induced from the composition $\cA_\tau \rightarrow \cY \rightarrow \cX = \cA_\sigma$ is an injection.  Let $\tau^\circ$ denote the interior of $\tau$, viewed as the set $N_\sigma \cap \sigma_{\mathbb{R}}$ of lattice points of a rational polyhedral cone $\sigma_{\mathbb{R}}$.  It is sufficient to show that $\tau^\circ \rightarrow \sigma$ is injective.  Suppose that $t, u \in \tau^\circ$ have the same image in $\sigma$.  These correspond to maps $t, u : \cA \rightarrow \cA_\tau$ with the same image in $\cX$.  But $\Hom(\cA, \cY) \rightarrow \Hom(\cA, \cX)$ is a bijection, so $t$ and $u$ induce the same map $\cA \rightarrow \cY$.  In other words, we have a commutative square of solid arrows:
\begin{equation*} \xymatrix{
\cA \ar[r]^t \ar[d]_u & \cA_\tau \ar[d] \\
\cA_\tau \ar[r] \ar@{-->}[ur] & \cY
} \end{equation*}
But $t$ and $u$ are interior points of $\tau$, so $\cA_\tau \rightarrow \cY$ is the minimal strict factorization of either of the compositions
\begin{equation*}
\cA \xrightarrow{t,u} \cA_\tau \rightarrow \cY
\end{equation*}
so that Proposition~\ref{prop:factorization} implies there is a unique automorphism of $\cA_\tau$ over $\cY$ sending $u$ to $t$ (the dashed arrow in the diagram).  By Corollary~\ref{Cor:barycenter},
\begin{equation*}
\Aut_{\cY}(\cA_\tau) = \Aut_{\cY}(b),
\end{equation*}
where $b$ denotes the barycenter of $\tau$ and the corresponding map $\cA \rightarrow \cA_\tau$.  But $\Aut_{\cY}(b)$ injects into $\Aut_{\cX}(b)$, because $\cY \rightarrow \cX$ was assumed to be representable.  Therefore this automorphism is the identity and $t = u$.

We conclude that $\cA_\tau \rightarrow \cY$ is injective.  As it is also strict, it is an open embedding and $\tau \subset \sigma$ is a subcone.  A strict family of maps $\cA_\tau \rightarrow \cY$ therefore corresponds to a family of subcones $\tau \subset \sigma$.  Each lattice point of $\sigma$ must be contained in at least one $\tau$.  By Proposition~\ref{prop:factorization}, there is a minimal such cone, up to inclusion of faces.  It follows, therefore, that $\cY$ is representable by a subdivision of $\sigma$.
}
\end{proof}

\subsection{A substitute for functoriality}\label{Sec:relative-Artin}

By construction the map $X \rightarrow \cX$ is strict if we give $\cX$ the logarithmic structure associated to the map $\cX \rightarrow \Log$.  The universal property characterizing $\cX$ implies that \change{$\cX$, and the map $X \rightarrow \cX$}, are functorial in $X$ with respect to strict morphisms.  \change{The same functoriality does not hold for arbitrary morphisms (see \cite[Section 5.4.1]{logsurv}) and we do not know in what generality  Artin fans should be functorial}.  We demonstrate a weaker sort of functoriality below.

Suppose that $Y \rightarrow X$ is a logarithmically smooth morphism of logarithmically smooth schemes.  Let $\cX$ be the Artin fan of $X$.  Then the morphism of logarithmic algebraic stacks $Y \rightarrow \cX$ corresponds to a \emph{smooth} morphism of algebraic stacks $Y \rightarrow \Log(\cX)$.  Let $\cY = \pi_0(Y/ \Log(\cX))$.  Then by composition we have a map from $\cY$ to $\cX$:
\begin{equation*}
\cY \rightarrow \Log(\cX) \rightarrow \cX 
\end{equation*}

\change{%
We verify below that $\cY$ is an Artin fan.  First, note that there is a map $\Log(\cX) \rightarrow \Log$, defined by taking a scheme $S$ with logarithmic structure $M_S$ and morphism $(S, M_S) \rightarrow \cX$ and forgetting the map.

\begin{lemma}
For any Artin cone $\cA_\sigma$, the map $\Log(\cA_\sigma) \rightarrow \Log$ is strict and \'etale.
\end{lemma}
\begin{proof}
Strictness is immediate from the definitions of the logarithmic structures.  The fiber of $\Log(\cX) \rightarrow \Log$ over $(S, M_S)$ is the space of logarithmic maps $(S, M_S) \rightarrow \cA_\sigma$, which map be identified with $\Hom(\sigma^\vee, \Gamma(S, \oM_S))$.  Since $\oM_S$ is a constructible \'etale sheaf, these maps may be identified with the sections of the espace \'etal\'e of the \'etale sheaf $\underline{\Hom}(\sigma^\vee, \Gamma(S, \oM_S))$ on $S$.
\end{proof}

\begin{corollary} \label{cor:rel-over-cone}
Suppose $Y$ is logarithmically smooth over an Artin cone $\cX$.  Then $\pi_0(Y/\Log(\cX)) \simeq \pi_0(Y/\Log)$.
\end{corollary}
\begin{proof}
Since $\Log(\cX)$ is strict and \'etale over $\Log$, these stacks have the same universal property.
\end{proof}

\begin{corollary}
Let $\cX$ be an Artin fan and $Y \rightarrow \cX$ a logarithmically smooth morphism.  Then $\pi_0(Y/\Log(\cX))$ is an Artin fan.
\end{corollary}
\begin{proof}
Let $\cY = \pi_0(Y/\Log(\cX))$.  We must show that $\cY$ has a strict \'etale cover by Artin cones.  Since $\cX$ has a strict, \'etale cover by Artin cones, and $\pi_0(Y/\Log(\cX))$ commutes with base change in $\cX$, we may assume that $\cX = \cA_\sigma$.  In this case, it follows from the previous corollary.
\end{proof}

\begin{definition}
Let $Y \rightarrow X$ be a logarithmically smooth morphism and let $\cX$ be the Artin fan of $X$.  We call $\pi_0(Y/\Log(\cX))$ the relative Artin fan of $Y$ over $X$.
\end{definition}

We record the following lemma for later reference:

\begin{lemma} \label{lem:artin-fan-props}
Let $f : \cY \rightarrow \cX$ be the morphism of Artin fans associated to a logarithmically smooth morphism of logarithmically smooth logarithmic schemes.  Then $f$ is quasiseparated, locally of finite presentation, and its fibers over field-valued points have affine stabilizers.
\end{lemma}
\begin{proof}
We note that all of these properties are \'etale-local in $\cX$.  We can therefore assume that $\cX$ is an Artin cone and therefore that $\cY$ is the Artin fan of a logarithmically smooth logarithmic scheme.  Then $\cY$ is quasiseparated by Corollary~\ref{cor:artin-fan-qcqs} and it is locally of finite presentation because it has an \'etale cover by Artin cones by Definition~\ref{def:Artin-fan}.  In this case, $\cY$ is representable over $\Log$, so if $k$ is a field, the stabilizer group of $y \in \cY(k)$ is a closed subgroup of the automorphism group of the image of $y$ in $\Log$.  The kernel of the map $\Aut_{\cY}(y) \rightarrow \Aut_{\cX}(f(y))$ is a closed subgroup of this finite group, hence is affine.
\end{proof}
}

\subsection{Modifications}

We will only consider logarithmic modifications of logarithmically smooth logarithmic schemes here, so we are content to define logarithmic modifications only in that generality.  F.\ Kato has given a more general definition in \cite[Definition~3.14]{Kato-log-mod}.%
\footnote{One may extend Definition~\ref{def:log-mod} in a manner analogous to \cite[Definition~3.14]{Kato-log-mod} by including morphisms that, \'etale-locally in the target, are fine and saturated base changes of logarithmic modifications in the sense of Definition~\ref{def:log-mod}.  We will not need this generality. \label{foot:log-mod}}

\begin{definition} \label{def:log-mod}
A \emph{logarithmic modification} of a logarithmically smooth logarithmic scheme $X$ is a proper, representable, birational, logarithmically \'etale morphism $Y \rightarrow X$.
\end{definition}

The main purpose of this section is to prove Corollary~\ref{cor:log-mod-str}, which says that all logarithmic modifications are deduced by base change from modifications of Artin fans.  The proof is by reduction, via logarithmic base change, to the case of a logarithmic modification of a logarithmic scheme whose logarithmic structure is the one associated to a smooth Cartier divisor.  As all logarithmic modifications of such logarithmic schemes are isomorphisms, the result is trivial in this case.

We frequently make use in this section of logarithmic changes of base.  Consistent with our assumption that all logarithmic structures are integral and saturated, these fiber products will all be taken in the category of integral, saturated logarithmic schemes.  In order to emphasize this, we refer to ``fine and saturated logarithmic base change'' in the sequel.  Notably, the underlying scheme of a fine and saturated fiber product of logarithmic schemes need not coincide with the fiber product of the underlying schemes in the diagram, unless at least one of the morphisms in the diagram is strict (or, more generally, saturated).

\begin{proposition} \label{prop:proper-base-change}
Let $p : Y \rightarrow X$ be a proper morphism of logarithmic schemes and $X' \rightarrow X$ an arbitrary morphism of logarithmic schemes.  Then the map $p' : Y' \rightarrow X'$ deduced from $p$ by fine and saturated logarithmic base change is also proper.
\end{proposition}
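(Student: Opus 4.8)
The plan is to reduce the statement to the properness of two more elementary morphisms of underlying schemes, using that ``proper'' is a condition on the underlying morphism of schemes, that it is \'etale-local on the target, and that it is stable under composition. The subtlety to be navigated is precisely the one emphasized in the discussion above: the underlying scheme of the fine and saturated fiber product $Y'$ is in general \emph{not} the fiber product $\uY\fpr_{\uX}\underline{X'}$ of underlying schemes. To disentangle the two phenomena I would factor the fine and saturated base change through the ``naive'' base change, which forces us to step—just this once—outside the fine and saturated world.

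First I would form the fiber product $W = Y\fpr_X X'$ in the category of \emph{all} (not necessarily integral or saturated) logarithmic schemes. For this fiber product the forgetful functor to schemes preserves the construction: one has $\underline{W} = \uY\fpr_{\uX}\underline{X'}$, the logarithmic structure being the one associated to the amalgamated sum of the pulled-back charts, a construction that does not alter the underlying scheme. The projection $\underline{W}\to\underline{X'}$ is then the ordinary base change of the proper morphism $\underline{p}\colon\uY\to\uX$, and is therefore proper.

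It remains to compare $Y'$ with $W$. Since the fine and saturated fiber product is the associated fine and saturated logarithmic scheme $W^{\mathrm{fs}}$, there is a canonical morphism $W^{\mathrm{fs}}\to W$ which factors as an integralization followed by a saturation. As $W$ is coherent, \'etale-locally on $\underline{W}$ we may choose a chart by a finitely generated monoid $P$, and since the chart $\underline{W}\to\Spec k[P]$ is strict, these two morphisms become the base changes of $\Spec k[P^{\mathrm{int}}]\to\Spec k[P]$ and $\Spec k[P^{\mathrm{sat}}]\to\Spec k[P^{\mathrm{int}}]$. The first is a closed immersion, because $P\to P^{\mathrm{int}}$ is surjective and hence $k[P]\to k[P^{\mathrm{int}}]$ is surjective; the second is finite, because $P^{\mathrm{sat}}$ is finitely generated as a module over the fine monoid $P^{\mathrm{int}}$ (Gordan's lemma), so that $k[P^{\mathrm{int}}]\to k[P^{\mathrm{sat}}]$ is a finite ring map. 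Thus $\underline{Y'}=\underline{W^{\mathrm{fs}}}\to\underline{W}$ is, \'etale-locally on its target, finite, hence proper; as properness is \'etale-local on the base, the morphism $\underline{Y'}\to\underline{W}$ is proper globally. Composing with the proper projection $\underline{W}\to\underline{X'}$ shows that $p'$ is proper.

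I expect the main obstacle to be conceptual rather than computational: one must be willing to leave the fine and saturated category in order to access a fiber product whose underlying scheme is the naive one, and then correctly recognize the passage back to fine and saturated logarithmic schemes—the morphism $W^{\mathrm{fs}}\to W$—as a proper (indeed finite) morphism of underlying schemes. Once the fs-ification morphism is isolated and its local structure through charts is identified, the remaining verifications—that closed immersions and finite morphisms are proper, and that properness descends along \'etale covers of the base—are routine.
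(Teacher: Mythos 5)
Your argument is correct, but it takes a genuinely different route from the paper's. The paper proves the proposition by verifying the valuative criterion directly: it equips the valuation ring $\oS$ with the logarithmic structure $M_{\oS}=\cO_{\oS}\times_{\cO_S}M_S$, notes that this is integral and saturated but possibly not coherent, and uses the universal property of the fine and saturated fiber product---extended to quasi-coherent integral saturated logarithmic structures in Corollary~\ref{cor:fs-lim}---to convert the lifting problem for $Y'\to X'$ into one for $Y\to X$, which properness of $p$ solves; a small diagram chase then upgrades the scheme-theoretic lift to a logarithmic one. You instead factor the fs base change as the fs-ification of the naive base change and control the underlying schemes of the two factors separately: the naive projection $\underline{W}\to\underline{X}'$ is an ordinary base change of the proper map $\underline{p}$, and $\underline{Y'}\to\underline{W}$ is, via charts, a closed immersion (integralization, since $P\to P^{\mathrm{int}}$ is surjective) followed by a finite morphism (saturation, since $P^{\mathrm{sat}}$ is a finite $P^{\mathrm{int}}$-module by Gordan's lemma). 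Both arguments lean on Appendix~\ref{sec:log-lim}: you need the identification of the fs fiber product with $W^{\mathrm{fs}}$ together with the local chart description of integralization and saturation, while the paper needs Corollary~\ref{cor:fs-lim} to handle the possibly non-coherent structure on $\oS$. Your route buys a stronger structural conclusion---$\underline{Y'}\to\underline{Y}\times_{\underline{X}}\underline{X}'$ is finite---and avoids non-coherent logarithmic structures altogether, so the paper's footnote about reducing to discrete valuation rings in the noetherian case becomes unnecessary; the paper's route avoids explicit chart computations and isolates exactly which universal property is needed. I see no gap in your argument.
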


\change{We thank the referee for the following proof, which is significantly simpler than our original argument.}

\begin{proof}\change{%
Consider the scheme-theoretic base change $\uZ' := \uX'\times_\uX\uY$ and the natural morphism  $\uY' \to \uZ'$. Since properness of morphisms of schemes is stable under base change we have that $\uZ' \to \uX'$ is proper. It is shown in \cite[III Corollary 2.1.6]{Ogus} that $\uY' \to \uZ'$ is finite, hence proper. Therefore $\uY' \to \uX'$ is proper, as required.
}
\end{proof}

\begin{corollary} \label{cor:log-mod-bc}
Suppose that $X' \rightarrow X$ is a morphism of logarithmically smooth logarithmic schemes and $Y \rightarrow X$ is a logarithmic modification.  Let $Y' \rightarrow X'$ be the morphism deduced by fine and saturated base change.  Then $Y'$ is a logarithmic modification of $X'$.
\end{corollary}
\change{%
\begin{proof} The schemes $X,X',Y$ and $Y'$ are logarithmically smooth, hence the loci $U_X,U_{X'},U_Y$ and $U_{Y'}$ where the logarithmic structure is trivial are dense. The morphism $U_Y \to U_X$  is an isomorphism, and $U_{X'}$ maps to $U_X$, hence  the pullback $U_{Y'} \to U_{X'}$  is an isomorphism. It follows that $Y' \to X'$ is birational, and, by Proposition~\ref{prop:proper-base-change}, it is proper. The morphism $Y' \to X'$ is logarithmically \'etale by base change, hence it is a logarithmic modification, as required.
\end{proof}%
}
\begin{proposition}
Suppose that $X$ is logarithmically smooth with the logarithmic structure associated to a smooth Cartier divisor.  Then $X$ has no nontrivial logarithmic modifications.
\end{proposition}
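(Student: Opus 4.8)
The plan is to reduce everything to the statement that such a logarithmic modification $f : Y \to X$ is \emph{strict}, after which the conclusion follows by classical arguments. Indeed, a strict and logarithmically \'etale morphism is \'etale on underlying schemes; a proper, representable, \'etale morphism is finite \'etale (properness plus quasi-finiteness gives finiteness); and a finite \'etale morphism that is birational has degree one on each connected component of its target and is therefore an isomorphism. Here the target $\uX$ is smooth, hence normal, because logarithmic smoothness together with the hypothesis that the logarithmic structure comes from a \emph{smooth} Cartier divisor $D$ forces $\uX$ to be smooth with $D$ a smooth boundary divisor. Thus the entire content is the strictness of $f$.

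To prove strictness I would argue on stalks of characteristic monoids. Since the logarithmic structure of $X$ is the one associated to the smooth Cartier divisor $D$, each stalk $\overline{M}_{X,x}$ is either $0$, for $x \notin D$, or $\mathbb{N}$, for $x \in D$. Let $y \in Y$ have image $x$. By Kato's local description of logarithmically \'etale morphisms, \'etale-locally $f$ factors as a strict \'etale morphism to $X \fpr_{\Spec k[\mathbb{N}]} \Spec k[Q]$ for an injective homomorphism $\mathbb{N} = \overline{M}_{X,x} \to Q = \overline{M}_{Y,y}$ whose groupification has finite cokernel; in particular $Q^{\gp} \otimes \mathbb{Q} \cong \mathbb{Q}$. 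Hence $Q$ is a sharp, fine, saturated monoid of rational rank at most one, so $Q$ is $0$ or $\mathbb{N}$. The possibility $Q = 0$ at a point $x \in D$ is excluded, since it would make $\overline{M}_{X,x}^{\gp} \otimes \mathbb{Q} \to \overline{M}_{Y,y}^{\gp} \otimes \mathbb{Q}$ fail to be an isomorphism, contradicting logarithmic \'etaleness.

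It remains to exclude $Q = \mathbb{N}$ with structure map multiplication by some integer $k > 1$, and this is exactly where birationality enters. Such a chart realizes $f$ \'etale-locally as a base change of the $k$-th power map $\Spec k[\mathbb{N}] \to \Spec k[\mathbb{N}]$, $t \mapsto t^k$, which has generic degree $k$ and so cannot be birational unless $k = 1$. I expect this step---using birationality to rule out the Kummer, root-stack-type logarithmically \'etale morphisms, which are otherwise perfectly admissible---to be the only real obstacle, the preceding reductions being formal. Once $k = 1$ at every point, the map $f^{-1}\overline{M}_X \to \overline{M}_Y$ is an isomorphism, so $f$ is strict and the reductions of the first paragraph conclude. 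Morally, the proposition simply records that the cone $\mathbb{R}_{\geq 0}$ corresponding to $\mathbb{N}$, the only tropical datum present, admits no nontrivial subdivision.
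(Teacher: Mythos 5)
Your strategy rests on the same pillar as the paper's proof, namely Kato's local structure theorem for logarithmically \'etale morphisms: both arguments reduce to the observation that over a chart $\bN \to \oM_X$ the only possible local models are (\'etale over) $X \fpr_{\bA^1} \Spec k[Q]$ with $Q = \bN$, so that the ray admits no nontrivial subdivision. The difference is one of routing: the paper extracts from this only that $Y \to X$ is \emph{quasi-finite}, and then concludes immediately from properness, birationality, and Zariski's main theorem (using that $\uX$ is normal, indeed smooth), without ever establishing strictness.

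The one step you should not wave at is the exclusion of $k>1$, which you correctly identify as the crux. Birationality is a global property of the proper map $Y \to X$, whereas the $k$-th power chart is an \'etale-local model near a single point $y$ lying over the divisor; the inference ``the local model has generic degree $k$, hence the map is not birational unless $k=1$'' does not follow from inspecting that local model alone. An \'etale neighborhood of $y$ is not proper over $X$, so its generic behavior constrains nothing a priori, and in general a birational morphism can certainly fail to be injective over a closed locus. To make your step rigorous one is forced back to the global argument: your chart analysis already gives quasi-finiteness, properness then gives finiteness, and a finite birational morphism onto the normal scheme $\uX$ satisfies $f_\ast \cO_Y = \cO_X$ and is therefore an isomorphism --- from which $k=1$ follows only \emph{a posteriori}. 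In other words, the strictness reduction is a detour whose key lemma is most easily proved by proving the proposition outright, which is exactly what the paper does. I would also note, as a minor point, that Kato's criterion does not hand you a chart with $Q = \oM_{Y,y}$ on the nose, only one surjecting onto it, though this does not affect the rank count.
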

\begin{proof}
Let $Y \rightarrow X$ be a logarithmic modification with $X$ as in the statement of the proposition.  We may work locally in $X$ and therefore we may assume that there is a strict map $X \rightarrow \bA^1$, where $\bA^1$ is given its standard toric logarithmic structure.  By Kato's criterion \cite[(3.5.2)]{Kato}, $Y$ is \'etale over $Z \fpr_{\bA^1} X$ for some toric map $Z \rightarrow \bA^1$ with $\dim Z = 1$.  All such $Z$ are quasifinite over $\bA^1$, so $Y$ must be quasifinite over $X$.  On the other hand $Y \rightarrow X$ is proper and birational.  It is therefore an isomorphism, by Zariski's main theorem.
\end{proof}

\begin{corollary} \label{cor:log-mod-df1}
Suppose that $Y$ is a logarithmic modification of a logarithmic scheme $X$ and $X'$ is logarithmically smooth with logarithmic structure associated to a smooth Cartier divisor in $X$.  For any logarithmic morphism $X' \rightarrow X$ the $X'$-scheme $Y'$ deduced from $Y$ by fine and saturated logarithmic base change is isomorphic to $X'$.
\end{corollary}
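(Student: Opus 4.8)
The plan is to obtain this statement as a direct combination of the two results immediately preceding it. First I would form the fine and saturated logarithmic fiber product $Y' = Y \fpr_X X'$, so that by definition the $X'$-scheme $Y' \to X'$ is precisely the fine and saturated logarithmic base change of the logarithmic modification $Y \to X$ along the given morphism $X' \to X$. By the corollary above asserting that the fine and saturated base change of a logarithmic modification is again a logarithmic modification, I conclude that $Y' \to X'$ is itself a logarithmic modification. Implicitly this repackages Proposition~\ref{prop:proper-base-change}, which supplies properness, together with the stability of representability, birationality, and logarithmic \'etaleness under fine and saturated base change.

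Next I would invoke the standing hypothesis that $X'$ is logarithmically smooth with logarithmic structure associated to a smooth Cartier divisor. This places $X'$ exactly in the situation treated by the preceding proposition, which asserts that every logarithmic modification of such a logarithmic scheme is isomorphic to it. Applying that proposition to the logarithmic modification $Y' \to X'$ produced in the first step yields an isomorphism $Y' \cong X'$ of logarithmic schemes over $X'$, and in particular the desired isomorphism of $X'$-schemes.

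I do not expect a genuine obstacle here: the mathematical content has been front-loaded into the two cited results, and the only thing left to verify is that their hypotheses are met, which holds by construction. The single point deserving a moment's care is the bookkeeping about the category in which the fiber product is formed. One must take the fine and saturated logarithmic fiber product throughout, both so that the base-change corollary genuinely applies to $Y' \to X'$ and so that $X'$ retains the divisorial logarithmic structure required by the triviality proposition; with that convention in force, as stipulated in this section, the argument is purely formal.
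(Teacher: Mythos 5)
Your proposal is correct and is exactly the argument the paper intends: the corollary is stated without proof precisely because it is the immediate combination of the preceding corollary (fine and saturated base change of a logarithmic modification is a logarithmic modification) with the preceding proposition (logarithmic modifications of a scheme with divisorial logarithmic structure are isomorphisms). Your remark about consistently working in the fine and saturated category matches the paper's own standing convention for this section.
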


\begin{corollary} \label{cor:af-proper}
Let $Y \rightarrow X$ be a logarithmically \'etale modification with $\cY \rightarrow \cX$ the associated morphism of Artin fans.  Then the map $\cY \rightarrow \cX$ is proper.
\end{corollary}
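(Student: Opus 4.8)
The plan is to apply the equivalence of conditions~\ref{proper:1} and~\ref{proper:5} in Corollary~\ref{cor:proper-fans}. Since $\cY \to \cX$ is a morphism of Artin fans coming from a modification, it is representable, so it will be proper as soon as it has the unique right lifting property against the inclusion $U \subset \cA$ of the open point. I would therefore fix a commutative square consisting of a map $U \to \cY$ and a map $a : \cA \to \cX$ agreeing on $U$, and manufacture a unique completion $b : \cA \to \cY$ over $\cX$.

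The heart of the argument is to pull the modification back to a smooth Cartier divisor situation, where the earlier triviality result applies. Given $a : \cA \to \cX$, I would form the fiber product $X_\cA = X \fpr_{\cX} \cA$. Because $X \to \cX$ is strict and smooth, so is $X_\cA \to \cA$; hence the logarithmic structure of $X_\cA$ is pulled back from $\cA$ and is the one associated to a smooth Cartier divisor (the preimage of the boundary of $\cA$). The projection $X_\cA \to X$ is a logarithmic morphism, so Corollary~\ref{cor:log-mod-df1} applies verbatim to the modification $Y \to X$: the base change $Y \fpr_X X_\cA \to X_\cA$ is an isomorphism. Inverting it and projecting to $Y$ yields a section $s : X_\cA \to Y$ over $X$.

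It then remains to descend $s$ to the level of Artin fans. Since $X \to \cX$ is the initial factorization through an \'etale $\Log$-space it has connected fibers, and the same is true after the strict base change, so $X_\cA \to \cA$ is strict, smooth, surjective with connected fibers. Moreover $\cA \to \cX$ is a morphism of objects each logarithmically \'etale over a point, hence itself logarithmically \'etale, so $\cA \to \Log(\cX)$ is \'etale. These two facts identify $\cA$ with the relative Artin fan $\pi_0(X_\cA / \Log(\cX))$. Composing $s$ with the strict map $Y \to \cY$ gives a morphism $X_\cA \to \cY$ over $\Log(\cX)$, which by the universal property of $\pi_0$ factors uniquely through $\cA$; that factorization is the sought lift $b : \cA \to \cY$. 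Its uniqueness over $\cX$ I would extract from the rigidity of maps out of $\cA$ into an Artin fan (the corollary identifying $\Gamma(\cA, \cY)$ with the value at the closed point), together with the prescribed restriction to $U$.

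The step I expect to be the main obstacle is this descent: identifying $\cA$ with $\pi_0(X_\cA / \Log(\cX))$ and, crucially, checking that the lift produced this way agrees on the open point with the given arrow $U \to \cY$, so that the two completions of the square genuinely coincide. This compatibility amounts to knowing that the formation of the relative Artin fan commutes with the base change along $a$, which is exactly the commutation of $\pi_0$ with base change established earlier in this section; once that is in hand, both the existence and the uniqueness of the completion follow formally from the universal property of $\pi_0$.
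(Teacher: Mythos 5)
Your proof is correct and follows essentially the same route as the paper's: both reduce via Corollary~\ref{cor:proper-fans} to checking the unique lifting property after base change along a map $\cA \to \cX$, invoke Corollary~\ref{cor:log-mod-df1} to trivialize the modification over $X \fpr_{\cX} \cA$, and then descend along the strict smooth surjection with connected fibers down to $\cA$. The paper phrases the final descent as smooth descent of the isomorphism $\cY \fpr_{\cX} \cA \to \cA$ rather than via the universal property of $\pi_0$, but this is the same mechanism.
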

\begin{proof}
Recall that by Theorem~\ref{thm:proper-fans}, a representable morphism of Artin fans $\cY \rightarrow \cX$ is proper if and only if it has the right lifting property with respect to the inclusion of the open point in $\cA$.  For a given instance of diagram~\eqref{eqn:7}, we may verify the existence of a lift after changing base via the given map $\cA \rightarrow \cX$.  Set $\cX' = \cA$ and write $X'$, $Y'$, and $\cY'$ for the logarithmic algebraic spaces and stack obtained by fine and saturated base change from $X$, $Y$, and $\cY$, respectively.  We will conclude by showing that $\cY' \rightarrow \cX'$ is an isomorphism.

\change{As $X \rightarrow \cX$ is strict, the fine and saturated pullback respects underlying schemes: $\uX' = \uX \times_{\ucX} \ucX'$. 
Therefore $X' \rightarrow \cX'$ is smooth and surjective with connected geometric fibers.  In particular, $\cX'$ is the Artin fan of $X'$.  For the same reasons, $\cY'$ is the relative Artin fan for $Y' \rightarrow \cX'$.

We have $Y' 
= Y \mathop\times_{X} X'$, so Corollary~\ref{cor:log-mod-bc} implies that $Y' \rightarrow X'$ is a logarithmic modification.  But $X'$ is smooth over $\cX' = \cA$, so the logarithmic structure on $X'$ is the one associated to a smooth Cartier divisor (namely the pullback of the distinguished divisor of $\cA$).  Therefore, by Corollary~\ref{cor:log-mod-df1}, the map $Y' \rightarrow X'$ is an isomorphism.  Thus the universal property of the relative Artin fan implies $\cY' \rightarrow \cX'$ is an isomorphism as well.}
\end{proof}

\begin{corollary} \label{cor:log-mod-str}
If $Y \rightarrow X$ is a logarithmically \'etale modification then $Y \rightarrow \cY \fpr_{\cX} X$ is an isomorphism.
\end{corollary}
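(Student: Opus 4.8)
The plan is to realize the comparison map as an \'etale, proper, birational morphism and then invoke Zariski's main theorem. Write $P = \cY \fpr_{\cX} X$ for the fine and saturated fiber product and $\phi : Y \to P$ for the canonical $X$-morphism, which exists because $\cY = \pi_0(Y/\Log(\cX))$ supplies a map $Y \to \cY$ compatible with $Y \to X$ over $\cX$. First I would check that $P$ is a logarithmic modification of $X$: by the preceding corollary $\cY \to \cX$ is proper, hence a subdivision by Corollary~\ref{cor:proper-fans}, and the fine and saturated base change of a subdivision along the strict map $X \to \cX$ is again proper, representable, logarithmically \'etale and birational. In particular $P \to X$ is an isomorphism over the dense open locus $U \subseteq X$ on which the logarithmic structure is trivial, and the same is true of $Y \to X$, so $\phi$ is birational.

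Next I would pin down the logarithmic structures. Since $X \to \cX$ is strict, smooth, surjective with connected fibers, its base change $P \to \cY$ along $\cY \to \cX$ inherits all four properties; consequently $\cY$ is the Artin fan of $P$ and the logarithmic structure of $P$ is pulled back from $\cY$. By construction the same holds for $Y$: the map $Y \to \cY$ is strict, just as $X \to \cX$ is, so $M_Y$ is pulled back from $\cY$ as well. The morphism $\phi$ lies over $\cY$, and since both $Y \to \cY$ and $P \to \cY$ are strict and $\phi$ commutes with them, $\phi$ is strict: both $\phi^\ast M_P$ and $M_Y$ are the pullback of $M_{\cY}$ along $Y \to \cY$.

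With strictness in hand the logarithmic hypotheses collapse. The map $\phi$ is logarithmically \'etale by the two-out-of-three property applied to the logarithmically \'etale maps $Y \to X$ and $P \to X$, the latter being a base change of the logarithmically \'etale map $\cY \to \cX$ of Artin fans. A strict, logarithmically \'etale morphism is \'etale in the classical sense, so $\phi$ is \'etale. It is moreover proper, since $Y \to X$ is proper and $P \to X$ is separated, and we have already seen that it is birational. A proper, \'etale, birational morphism is finite \'etale of degree one over the preimage of $U$, which is dense in $P$; hence the degree is one on every component and $\phi$ is an isomorphism by Zariski's main theorem.

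The step I expect to be the main obstacle is the identification of the logarithmic structures in the second paragraph, specifically verifying that $\cY$ is the Artin fan of $P$ and hence that $\phi$ is strict. Everything else is formal once strictness is known, because it reduces the logarithmically \'etale hypothesis to ordinary \'etaleness; but strictness itself rests on the base-change stability of the properties ``strict, smooth, surjective, connected fibers'' for the map $X \to \cX$, and on the fact that the comparison morphism genuinely factors through the common Artin fan $\cY$.
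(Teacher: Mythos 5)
Your proof is correct and follows essentially the same route as the paper: show $\cY \fpr_{\cX} X$ is a modification of $X$, deduce that the comparison map is strict and hence (by two-out-of-three for logarithmically \'etale maps) \'etale, and conclude via Zariski's main theorem from properness and birationality. The only difference is that you spell out the strictness and base-change verifications that the paper leaves implicit, which you correctly identify as the step carrying the real content.
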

\begin{proof}
First of all, $\cY \fpr_{\cX} X$ is a modification of $X$ so $Y \rightarrow \cY \fpr_{\cX} X$ is a modification.  On the other hand, $Y \rightarrow \cY \fpr_{\cX} X$ is strict and both $Y$ and $\cY \fpr_{\cX} X$ are logarithmically \'etale over $X$, so $Y$ is \'etale over $\cY \fpr_{\cX} X$.  Thus $Y \rightarrow \cY \fpr_{\cX} X$ is \'etale, proper, and birational, hence an isomorphism by Zariski's main theorem.
\end{proof}

\numberwithin{theorem}{section}
\section{Algebraicity}\label{Sec:algebraicity}
\label{Sec:M'}
\label{Sec:mcx+mcy}

\change{ 
Let $S$ be a logarithmic scheme and let $\cX$ be an algebraic stack over $S$, equipped with a logarithmic structure.  We write $\fM(\cX/S)$ for the stack whose $T$-points are commutative diagrams
\begin{equation*} \xymatrix{
C \ar[r] \ar[d] & \cX \ar[d] \\
T \ar[r] & S
} \end{equation*}
in which $C \rightarrow T$ is a family of pre-stable logarithmic curves.  We write $\fM(\cX)$ for $\fM(\cX/S)$ when $S$ is a point
}%
\change{%
and we write $\fM$ for $\fM(\cX/S)$ when both $\cX$ and $S$ are points.
}

\begin{proposition} \label{prop:log-etale-over-point}
If $\cX$ is a quasicompact, quasiseparated Artin fan \change{(for example, if $\cX$ is the Artin fan of a quasicompact, quasiseparated, logarithmically smooth logarithmic scheme, or the relative Artin fan of a logarithmically smooth morphism of logarithmically smooth, quasicompact, quasiseparated logarithmic schemes}), then $\fM(\cX)$ is \change{an  algebraic stack.}
\end{proposition}
\begin{proof}
\change{
In view of Lemma~\ref{lem:artin-fan-props}, this is immediate from \cite[Corollary~1.1.1]{minimal}. 
}
\end{proof}


\begin{proposition} \label{prop:rel-alg-stack}
\change{Let $\cY \rightarrow \cX$ be a logarithmically \'etale $S$-morphism of logarithmic algebraic stacks that are locally of finite presentation over $S$.  Assume that $\fM(\cY/S)$ and $\fM(\cX/S)$ are representable by algebraic stacks with logarithmic structures.  Then $\fM(\cY/S) \rightarrow \fM(\cX/S)$ is logarithmically \'etale.}
\end{proposition}

\begin{proof}
\change{
First we verify that $\fM(\cX/S)$ is locally of finite presentation.  We must show that, for any cofiltered system of affine $S$-schemes $T_i$ with limit $T$, the morphism
\begin{equation*}
\varinjlim \fM(\cX/S)(T_i) \rightarrow \fM(\cX/S)(T)
\end{equation*}
is an equivalence.  An object of $\fM(\cX/S)(T)$ is a logarithmic curve $C$ over $T$ and a $T$-morphism $C \rightarrow \cX$.  Since the stack of logarithmic curves is locally of finite presentation, $C$ is induced from a logarithmic curve $C_i$ over some $T_i$, uniquely up to unique isomorphism and enlargement of $i$.  As $\cX$ is locally of finite presentation, the map $C \rightarrow \cX$ is induced from $C_i \rightarrow \cX$ for some, possibly larger value of $i$, again uniquely up to unique isomorphism and further enlargement of $i$.  This proves that $\fM(\cX/S)$ (and likewise $\fM(\cY/S)$) is locally of finite presentation.
}

To prove that \change{$\fM(\cY/S) \rightarrow \fM(\cX/S)$} is logarithmically \'etale, it remains to verify the infinitesimal lifting property.  Consider a strict infinitesimal extension $T \subset T'$ and a lifting problem:
\begin{equation} \label{eqn:10} \vcenter{ \xymatrix{
T \ar[r] \ar[d] & \fM(\cY/S) \ar[d] \\
T' \ar@{-->}[ur] \ar[r] & \fM(\cX/S)
} } \end{equation}
\change{We must show that this diagram has a unique lift.}

This corresponds to a lifting problem
\begin{equation*} \xymatrix{
C \ar[r] \ar[d] & \cY \ar[d] \\
C' \ar@{-->}[ur] \ar[r] & \cX
} \end{equation*}
in which $C \rightarrow \cY$ and $C' \rightarrow \cX$ are, respectively, the families of maps over $T$ and $T'$ defining classified by the diagram~\eqref{eqn:10}.  As $C \subset C'$ is a strict infinitesimal extension, this diagram has a unique lift.
\end{proof}

\begin{remark}
If $\cY \rightarrow \cX$ has logarithmically quasifinite diagonal then the conclusion of the proposition is valid, even without the assumption that $\fM(\cY/S)$ and $\fM(\cX/S)$ be algebraic, in the sense that $\fM(\cY/S) \rightarrow \fM(\cX/S)$ is representable by \'etale morphisms of algebraic stacks.  As we do not need the additional generality, and the proof is more involved, we have omitted it.
\end{remark}

\change{
\begin{proof}[Proof of Proposition~\ref{Prop:mcx-log-smooth}.] By Proposition~\ref{prop:log-etale-over-point} the stack $\fM(\cX)$ is algebraic. Since $\cX \to \Spec k$ is logarithmically \'etale, Proposition~\ref{prop:rel-alg-stack} implies that the morphism $\fM(\cX) \to \fM$ to the stack of prestable curves is logarithmically \'etale. As $\fM$ is logarithmically smooth $\fM(\cX)$ is logarithmically smooth as well, as needed. \end{proof}
}


 Our arguments require another stack
 \change{$\fM(\cY \rightarrow \cX)$}, the moduli space \change{whose objects over a logarithmic scheme $T$ are  diagrams}
\begin{equation}\label{Eq:M'} \vcenter{
\change{
\xymatrix{
C \ar[r] \ar[d] & \cY \ar[d] \\
\oC \ar[r] \ar[d] & \cX  \\
T  & 
}} 
} \end{equation}
in which $C$ and $\oC$ are pre-stable logarithmic curves; \change{Arrows over a logarithmic morphism $T' \to T$ are given by fiber diagrams}.  We write $\fM'(\cY \rightarrow \cX)$ for the open substack of $\fM(\cY \rightarrow \cX)$ where
\begin{enumerate}
\item \label{cond:M'-1} the automorphism group of~\eqref{Eq:M'} relative to its image $\oC \to \cX$ in $\fM(\cX)$ is finite, and
\item \label{cond:M'-2} the stabilization of the map $C \rightarrow \oC$ is an isomorphism (i.e., it is a contraction of unstable rational components).
\end{enumerate}

\begin{proposition} \label{prop:rel-log-etale}
\change{Let $\cY \rightarrow \cX$ be the morphism of Artin fans associated to a logarithmically smooth morphism of logarithmic schemes.  Then the morphism from $\fM(\cY \rightarrow \cX)$ to $\fM(\cX)$ is representable by algebraic stacks.}
\end{proposition}
\begin{proof}
    Working \change{relative  to $\fM(\cX)$} we may assume that a diagram
\change{\begin{equation*} \xymatrix{
\oC_0 \ar[d] \ar[r] & \cX \\
S
} \end{equation*}}
is given.  Then we are to prove that the stack of all logarithmically commutative diagrams
\begin{equation*} \change{\xymatrix{
C \ar[d] \ar[r]  &\cY \fp_{\cX} \oC_0\ar[r]\ar[d] & \cY \ar[d] \\
\oC \ar[r]\ar[d]& \oC_0 \ar[r] \ar[d] & \cX  \\
T \ar[r] & S}} \end{equation*}
\noindent
\change{where the bottom left square is cartesian,} is algebraic.  We may identify this as the space of pre-stable logarithmic  maps \change{$\fM(\cY \fp_{\cX} \oC_0 / S)$}.  \change{But by Lemma~\ref{lem:artin-fan-props}, the map $\cY \rightarrow \cX$ is locally of finite presentation, quasiseparated, and has affine stabilizers, so the same applies to $\cY \fp_{\cX} \oC_0 \rightarrow \oC_0$ by base change; as $\oC_0$ is a family of pre-stable curves, we conclude that $\cY \fp_{\cX} \oC_0$ is also locally of finite presentation, quasiseparated, and has affine stabilizers.  We may therefore apply \cite[Corollary~1.1.1]{minimal}, from which it follows that $\fM(\cY \fp_{\cX} \oC_0 / S)$ is algebraic.}
\end{proof}


\begin{corollary}\label{Cor:M'-algebraic}
Let $\cY \rightarrow \cX$ be a logarithmic morphism between Artin fans.  Then the stack $\fM'(\cY \rightarrow \cX)$ is algebraic.
\end{corollary}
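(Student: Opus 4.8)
The plan is to deduce algebraicity of $\fM'(\cY \rightarrow \cX)$ from the representability results already established, by exhibiting it as an open substack of an algebraic stack. The essential observation, and the only step that is not purely formal, is that every morphism between Artin fans is automatically logarithmically \'etale; once this is in hand, Corollary~\ref{cor:rel-log-etale:2} and Corollary~\ref{cor:log-etale-over-point} do the rest.

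First I would verify that $\cY \rightarrow \cX$ is logarithmically \'etale. By hypothesis $\cY$ and $\cX$ are Artin fans, so their structure morphisms to $\Spec k$ are logarithmically \'etale. Factoring $\cY \rightarrow \Spec k$ as $\cY \rightarrow \cX \rightarrow \Spec k$ and applying the cancellation property of logarithmically \'etale morphisms (if $g$ and $g \circ f$ are logarithmically \'etale then so is $f$), I conclude that $\cY \rightarrow \cX$ is logarithmically \'etale. This is exactly the hypothesis needed to invoke Corollary~\ref{cor:rel-log-etale:2} with $S = \Spec k$, which shows that the structure morphism $\fM(\cY \rightarrow \cX) \rightarrow \fM(\cX)$ is representable by logarithmic algebraic stacks.

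Next I would argue that the base $\fM(\cX)$ is algebraic. Since a pre-stable curve is quasi-compact, its image in $\cX$ lies in a quasi-compact open, and $\cX$ is covered by the quasi-compact, quasi-separated opens $\cA_\sigma$; this reduces algebraicity of $\fM(\cX)$ to the quasi-compact, quasi-separated case, where it follows from Corollary~\ref{cor:log-etale-over-point}. A stack representable over an algebraic stack is itself algebraic, so $\fM(\cY \rightarrow \cX)$ is algebraic. Finally, $\fM'(\cY \rightarrow \cX)$ is by construction an open substack of $\fM(\cY \rightarrow \cX)$, and open substacks of algebraic stacks are algebraic; this gives the claim.

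I expect the main (modest) obstacle to be conceptual rather than computational: recognizing that the bare hypothesis ``logarithmic morphism between Artin fans'' already forces logarithmic \'etaleness, so that the machinery of the preceding corollaries applies unchanged. The remaining ingredients, namely the stability of algebraicity under representable morphisms over an algebraic base and the openness of the locus with finite relative automorphisms defining $\fM'$, are standard and require no new input.
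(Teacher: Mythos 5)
Your proposal is correct and follows essentially the same route as the paper: algebraicity of $\fM(\cX)$ from Corollary~\ref{cor:log-etale-over-point}, relative algebraicity of $\fM(\cY \rightarrow \cX)$ over it from Corollary~\ref{cor:rel-log-etale:2}, and openness of the stability condition cutting out $\fM'(\cY \rightarrow \cX)$. Your explicit verification that a morphism of Artin fans is automatically logarithmically \'etale (by cancellation) is a point the paper leaves implicit, but it is not a different argument.
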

\begin{proof}
By Proposition~\ref{prop:log-etale-over-point}, we know that $\fM(\cX)$ is algebraic.  By \change{Proposition~\ref{prop:rel-log-etale}}
 we deduce that $\fM(\cY \rightarrow \cX)$ is relatively algebraic over $\fM(\cX)$, hence is algebraic.  But the stability condition defining $\fM'(\cY \rightarrow \cX)$ inside $\fM(\cY \rightarrow \cX)$ is open, so it now follows that $\fM'(\cY \rightarrow \cX)$ is algebraic.
\end{proof}

This gives the algebraicity statement of Proposition \ref{Prop:M'}.
 
\numberwithin{theorem}{section}
\section{The cartesian diagram}\label{Sec:cartesian}

Let $Y \rightarrow X$ be a logarithmic modification of proper logarithmic schemes.  Section~\ref{Sec:toric-stacks} provides us a cartesian diagram of logarithmic stacks
\begin{equation} \label{eqn:4} \vcenter{\xymatrix{
Y  \ar[d]\ar[r] & X \ar[d]   \\
\cY\ar[r]  & \cX
}} \end{equation}
in which 
\begin{enumerate}
\item $\cX$ and $\cY$ are Artin fans,
\item  the vertical arrows are strict, smooth, and surjective, and
\item $X$ and $Y$ are proper logarithmic schemes.
\end{enumerate}

We consider the following diagram
\begin{equation} \label{eqn:1} \vcenter{\xymatrix{
\ocM(Y) \ar[r] \ar[d] & \ocM(X) \ar[d] \\
\fM'(\cY \rightarrow \cX) \ar[r] & \fM(\cX)
}} \end{equation}
with the following definitions:
\begin{enumerate}
\item $\ocM(X)$ and $\ocM(Y)$ are, respectively, the moduli stacks of stable logarithmic maps into $X$ and $Y$,
\item $\fM(\cX)$ is the moduli space of pre-stable logarithmic maps into $\cX$, and
\item $\fM'(\cY \rightarrow \cX)$ is the moduli space of  diagrams (\ref{Eq:M'}) described in Section~\ref{Sec:M'} above, \change{with the relative stability condition described there}.
\end{enumerate}

 The map  $\ocM(X) \to \fM(\cX)$ is defined by composition of $C \to X$ with $X \to \cX$.  The map $\fM'(\cY \rightarrow \cX) \to \fM(\cX)$ is obtained by sending a diagram (\ref{Eq:M'}) to the map $\oC \to  \cX$. The map $\ocM(Y) \to \ocM(X)$ is defined using
\cite[B.6]{AMW}: an object $C \to Y$ of $\ocM(Y)$ induces a stable map $\oC \to  X$ \change{by stabilization of the composition $C \rightarrow Y \rightarrow X$; it comes  along  with a commutative} diagram 
\begin{equation*} \xymatrix{
C \ar[r] \ar[d] & Y \ar[d] \\
\oC \ar[r] & X.
} \end{equation*}
Since $Y = X \times_\cX \cY$ this extends uniquely to
\begin{equation*} \xymatrix{
C \ar[r] \ar[d] & Y \ar[d]\ar[r]&\cY \ar[d] \\
\oC \ar[r] & X\ar[r]& \cX.
} \end{equation*}
This gives us the map \change{$\ocM(Y) \to \fM(\cY \rightarrow \cX)$}.  \change{We argue that it takes values in $\fM'(\cY \to \cX)$.  Suppose that $C \rightarrow Y$ is an object of $\ocM(Y)$, let $H$ be the automorphism group of its image in $\fM(\cY \to \cX)$, and let $H'$ be the subgroup of $H$ fixing $\oC \to \cX$.  Let $G$ be the automorphism group of $C \to Y$ and let $G'$ be the subgroup fixing $\oC \to X$.  By the universal property of fiber products, $G'$ and $H'$ are isomorphic; as $G$ is finite, so is $G'$, and therefore so is $H'$.  Thus $\ocM(Y) \to \fM(\cY \to \cX)$ takes values in $\fM'(\cY\to\cX)$, as claimed.}  

\begin{lemma} \label{lem:strictness}
The maps 
\begin{enumerate}[label=(\roman{*})]
\item $\ocM(X) \rightarrow \fM(\cX)$,
\item $\ocM(Y) \rightarrow \fM'(\cY \rightarrow \cX)$, and
\item $\fM'(\cY \rightarrow \cX) \rightarrow \fM(\cY)$
\end{enumerate}
are strict.
\end{lemma}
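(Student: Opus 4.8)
The plan is to reduce all three assertions to a single principle: the minimal (basic) logarithmic structure carried by each of these moduli stacks is determined by the combinatorial type of the object it parametrizes — the dual graph together with the induced tropical map to the relevant cone complex — and the morphisms in question all preserve this combinatorial type. Since strictness of a morphism of logarithmic stacks means that the comparison map on characteristic monoids is an isomorphism at every geometric point, it suffices to argue one geometric point at a time and to compare the associated basic monoids.

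For (i), I would note that $\ocM(X) \to \fM(\cX)$ sends a stable logarithmic map $C \to X$ to the pre-stable logarithmic map $C \to \cX$ obtained by composing with $X \to \cX$. As $X \to \cX$ is strict (condition (2) of diagram~\ref{eqn:4}), it induces an isomorphism of characteristic monoids along the images of the components, nodes, and markings of $C$, so the tropical map attached to $C \to X$ coincides with the one attached to $C \to \cX$. Because the basic monoid of a logarithmic map to a fine saturated target depends only on this tropical data, the two basic monoids agree and the natural comparison map is the identity; hence (i) holds. The same argument, applied to the strict morphism $Y \to \cY$ (a base change of the strict $X \to \cX$), shows that the composite $\psi_Y \colon \ocM(Y) \to \fM(\cY)$, sending $C \to Y$ to $C \to \cY$, is strict as well.

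The crux is (iii). Here I must show that enlarging the datum of a logarithmic map $C \to \cY$ to a full diagram~\eqref{Eq:M'} does not enrich the minimal logarithmic structure. For a diagram in $\fM'(\cY \to \cX)$ the remaining data are forced by the top row: since $\cY \to \cX$ is a subdivision (Corollary~\ref{cor:proper-fans}), the composite $C \to \cY \to \cX$ together with the logarithmically \'etale map $C \to \oC$ determines $\oC \to \cX$, while $\oC$ arises from $C$ by contracting the edges introduced by the subdivision. Tropically, $C \to \oC$ subdivides the tropical curve and the edge lengths of $\oC$ are sums of edge lengths of $C$, so no new smoothing parameters or vertex positions appear; the basic monoid of the whole diagram is canonically the basic monoid of $C \to \cY$. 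Since the logarithmically \'etale condition on $C \to \oC$ is open (Corollary~\ref{cor:rel-log-etale:2}), it does not affect the logarithmic structure where it holds. I expect the main obstacle to lie precisely in checking cleanly that the auxiliary data $(\oC, \oC \to \cX, C \to \oC)$ are determined, as a minimal object, by $C \to \cY$, so that $\alpha \colon \fM'(\cY \to \cX) \to \fM(\cY)$ identifies basic monoids; this is where the subdivision hypothesis on $\cY \to \cX$ is indispensable.

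Finally, (ii) follows formally by cancellation. The morphism $\ocM(Y) \to \fM'(\cY \to \cX)$ of (ii) is $\psi'_Y$, and $\alpha \circ \psi'_Y = \psi_Y$, which I have already shown to be strict. As $\alpha$ is strict by (iii), and an isomorphism on characteristic monoids factoring through another isomorphism must itself be an isomorphism, $\psi'_Y$ is strict. Thus the whole lemma reduces to the basic-monoid computations of (i) and (iii), with (iii) carrying the essential content.
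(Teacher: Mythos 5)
Your treatment of (i) and the cancellation argument for (ii) match the paper's proof: the paper likewise observes that the minimality condition on an $S$-point $f\colon C\to X$ depends only on $f^*M_X\to M_C$, which coincides with $g^*M_{\cX}\to M_C$ because $X\to\cX$ is strict, and then deduces (ii) from (iii) together with the strictness of $\ocM(Y)\to\fM(\cY)$. The issue is in (iii), which you correctly identify as the crux but justify by a claim that is false: the auxiliary data $(\oC,\ \oC\to\cX,\ C\to\oC)$ are \emph{not} determined by $C\to\cY$. Taking $\oC=C$ with the composite $C\to\cY\to\cX$ is always one valid point of $\fM'(\cY\to\cX)$ over a given $C\to\cY$, and nontrivial contractions of semistable chains may give others; indeed the map $\alpha$ is only shown to be \'etale, not a monomorphism. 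So "the remaining data are forced by the top row" cannot be the mechanism, and the subdivision hypothesis on $\cY\to\cX$, which you lean on for this, is not what makes (iii) work.

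What strictness actually requires, and what the paper proves, is weaker: for a \emph{fixed} $S$-point of $\fM'(\cY\to\cX)$, the datum $g^*M_{\cX}\to M_{\oC}$ entering the minimality condition is reconstructible from $f^*M_{\cY}\to M_C$ alone, via the composite
\begin{equation*}
g^*M_{\cX}\ \rightarrow\ \tau_*f^*M_{\cY}\ \rightarrow\ \tau_*M_C\ \simeq\ M_{\oC},
\end{equation*}
where $\tau\colon C\to\oC$ is the contraction and the last isomorphism is \cite[Theorem~B.6]{AMW}, valid because $\tau$ is logarithmically \'etale and hence contracts chains of semistable components. Consequently the minimality conditions for $\fM'(\cY\to\cX)$ and for $\fM(\cY)$ at corresponding points depend on the same data and produce the same logarithmic structure on $S$, which is strictness of $\alpha$. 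Your tropical heuristic (edge lengths of $\oC$ are sums of edge lengths of $C$, so no new parameters appear in the basic monoid) is a correct picture of this, but as written it is attached to the wrong uniqueness claim; replacing that claim with the pushforward isomorphism above closes the gap.
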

\begin{proof}
For the map $\ocM(X) \rightarrow \fM(\cX)$ we consider an $S$-point $f : C \rightarrow X$ of $\ocM(X)$.  Let $g : C \rightarrow \cX$ be the induced $S$-point of $\fM(\cX)$.  The minimality condition defining the logarithmic structures of $\ocM(X)$ and $\fM(\cX)$ depend, respectively, only on the morphisms of logarithmic structures $f^\ast M_X \rightarrow M_C$ and $g^\ast M_{\cX} \rightarrow M_C$ on $C$.  As $X \rightarrow \cX$ is strict, these data coincide.

For the map $\ocM(Y) \rightarrow \fM'(\cY \rightarrow \cX)$, first note that $\ocM(Y) \rightarrow \fM(\cY)$ is strict by the previous paragraph.  It is therefore sufficient to show that $\fM'(\cY \rightarrow \cX) \rightarrow \fM(\cY)$ is strict.  The definition of minimality for $\fM(\cY)$ at an $S$-point $f : C \rightarrow \cY$ depends, as above, on the morphism $f^\ast M_{\cY} \rightarrow M_C$.  The definition of minimality for $\fM'(\cY \rightarrow \cX)$ at an $S$-point
\begin{equation*} \xymatrix{
C \ar[r]^f \ar[d]_{\tau} & \cY \ar[d] \\
\oC \ar[r]^g & \cX
} \end{equation*}
depends on the maps $f^\ast M_{\cY} \rightarrow M_C$ and $g^\ast M_{\cX} \rightarrow M_{\oC}$.  However, the latter of these may be constructed from the former as the composition
\begin{equation*}
g^\ast M_{\cX} \rightarrow \tau_\ast f^\ast M_{\cY} \rightarrow \tau_\ast M_C \simeq M_{\oC}
\end{equation*}
taking into account the isomorphism $\tau_\ast M_C \simeq M_{\oC}$ of \cite[Theorem~B.6]{AMW}.  Therefore the minimality conditions depend on the same data, so they yield the same logarithmic structures.
\end{proof}

\begin{proof}[Proof of Proposition \ref{Prop:cartesian}]
We verify that diagram~\eqref{eqn:1} is logarithmically cartesian.  As its vertical arrows are strict by Lemma~\ref{lem:strictness}, this will imply that the underlying diagram of algebraic stacks is cartesian as well.

Suppose that we are given maps $S \rightarrow \fM'(\cY \rightarrow \cX)$ and $S \rightarrow \ocM(X)$ along with an isomorphism between the induced maps \change{$S \rightarrow \fM(\cX)$.}  These data correspond to a diagram of solid lines
\begin{equation} \label{eqn:3} \vcenter{\xymatrix{
C \ar[d] \ar@/^15pt/[rr] \ar@{-->}[r] & Y_S \ar[r] \ar[d] & \cY_S \ar[d] \\
\oC \ar[r] & X_S \ar[r] & \cX_S
}} \end{equation}
of logarithmic algebraic stacks over $S$.  We obtain a map $C \rightarrow Y_S$ completing the commutative diagram by the universal property of the fiber product.  

\change{
It remains to verify that $C \to Y_S$ is stable. }
\change{Fixing diagram~\eqref{eqn:3}, we let $H$ denote the automorphism group of the induced object of $\fM(\cY\to\cX)$.  We let $H''$ be the automorphism group of the induced object $\oC\to\cX_S$ of $\fM(\cX)$ and we take $H'$ to be the kernel of $H \rightarrow H''$, i.e., the subgroup of $H$ fixing $\oC\to\cX_S$.  We define $G$ to be the automorphism group of the induced object of $\fM(Y\to X)$, we take $G''$ to be the automorphism group of $\oC\to X_S$ as an object of $\fM(X)$, and we take $G'$ to be the kernel fo $G \to G''$.  Then $G''$ is finite, because $\oC\to X_S$ is in $\ocM(X)$ by hypothesis, and $H'$ is finite, because the outer square of~\eqref{eqn:3} is in $\fM'(\cY\to\cX)$, by hypothesis.  But $H' \simeq G'$ by the universal property of the fiber product, so we conclude that $G$ is finite.

Now, $\oC\to X_S$ is stable, it must be the stabilization of $C \to X_S$.  In particular, it is determined functorially from $C\to Y_S$, so that the automorphism group the object of $\fM(Y\to X)$ induced from~\eqref{eqn:3} coincides with the automorphism group of the of $C\to Y_S$ as an object of $\fM(Y)$.  Thus $C\to Y_S$ has finite automorphism group and lies therefore in $\ocM(Y)$.}

 

\end{proof}

\numberwithin{theorem}{subsection}
\section{The universal logarithmic modification}\label{Sec:universal}

\change{Let $\cY \rightarrow \cX$ be the morphism of Artin fans associated to $Y \to X$ constructed in Section~\ref{Sec:relative-Artin}.}
We obtain a correspondence
\begin{equation*} \xymatrix{
& \fM'(\cY \rightarrow \cX) \ar[dl] \ar[dr] \\
\fM(\cY) & & \fM(\cX)
} \end{equation*}
where $\fM'(\cY \rightarrow \cX)$ is the moduli space of minimal logarithmic diagrams~\eqref{Eq:M'} constructed in Section~\ref{Sec:M'}.

\subsection{The arrow  $\fM'(\cY \rightarrow \cX) \to \fM(\cY)$: proof of Proposition~\ref{Prop:M'}}\label{Sec:M'toM}
Algebraicity was shown in Corollary \ref{Cor:M'-algebraic} and strictness was shown in Lemma~\ref{lem:strictness}.  All that is left is to show that $\fM'(\cY \rightarrow \cX)$ is logarithmically \'etale over $\fM(\cY)$.  A logarithmic infinitesimal lifting problem
\begin{equation*} \xymatrix{
S \ar[r] \ar[d] & \fM'(\cY \rightarrow \cX) \ar[d] \\
S' \ar[r] \ar@{-->}[ur] & \fM(\cY)
} \end{equation*}
corresponds to a logarithmic extension problem
\begin{equation*} \xymatrix{
C \ar[r] \ar[d] & C' \ar[r] \ar@{-->}[d] \ar@/_15pt/[dd] & \cY \ar[d] \\
\oC \ar[d] \ar@{-->}[r] \ar@/^15pt/[rr] & \oCp \ar@{-->}[d] \ar@{-->}[r] & \cX \\
S \ar[r] & S' .
} \end{equation*}
Now, $C \rightarrow \oC$ is a contraction of unstable components so we may apply \cite[Appendix B]{AMW} to  obtain $\oCp$ \change{uniquely}.  All that is left is to produce the map $\oCp \rightarrow \cX$ and show it is unique.  This follows from the lemmas below.

\begin{lemma}
$\oCp$ is the pushout of the maps $C \rightarrow \oC$ and $C \rightarrow C'$ in the category of logarithmic schemes.
\end{lemma}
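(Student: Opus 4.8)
The plan is to verify the universal property of the pushout directly, after identifying the underlying scheme and the logarithmic structure of $\oCp$ by means of \cite[Theorem~B.6]{AMW}.

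First I would record the effect of the strict infinitesimal extension. Since $S \subset S'$ is strict and infinitesimal and the curve is logarithmically smooth over $S'$, the inclusion $i : C \rightarrow C'$ of the special curve is again a strict nilpotent thickening; in particular $|C| = |C'|$ and the characteristic sheaves satisfy $\overline{M}_C = \overline{M}_{C'}$. The contraction $\pi' : C' \rightarrow \oCp$ produced from \cite[Appendix~B]{AMW} contracts the same chains of semistable components as $\pi : C \rightarrow \oC$, so $\pi$ and $\pi'$ agree on underlying spaces and $|\oCp| = |\oC|$. Thus the square is already a pushout of topological spaces, and the induced arrow $j : \oC \rightarrow \oCp$ is a nilpotent thickening mirroring $i$.

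Next I would identify the sheaves. Theorem~B.6 of \cite{AMW} gives $\pi_\ast \cO_C = \cO_{\oC}$ and $\pi_\ast M_C = M_{\oC}$ and, applied to $C'$, also $\pi'_\ast \cO_{C'} = \cO_{\oCp}$ and $\pi'_\ast M_{C'} = M_{\oCp}$. I would then check that these identifications realize $\oCp$ as the Ferrand-type gluing
$$\cO_{\oCp} = \cO_{\oC} \times_{\pi_\ast \cO_C} \pi'_\ast \cO_{C'}, \qquad M_{\oCp} = M_{\oC} \times_{\pi_\ast M_C} \pi'_\ast M_{C'},$$
the point being that the restriction $\cO_{\oC} \to \pi_\ast \cO_C$ is an isomorphism (and likewise for $M$), so the fibre product collapses onto $\pi'_\ast \cO_{C'} = \cO_{\oCp}$. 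These are exactly the formulas describing a pushout along the closed immersion $i$. With these descriptions in hand, the universal property is straightforward: given a logarithmic scheme $Z$ together with $u : \oC \rightarrow Z$ and $v : C' \rightarrow Z$ satisfying $u \pi = v i$, the map on spaces $|\oCp| = |\oC| \xrightarrow{|u|} |Z|$ is forced, a homomorphism $\cO_Z \to \cO_{\oCp} = \pi'_\ast \cO_{C'}$ (respectively $M_Z \to M_{\oCp}$) is produced from $v$, and the compatibility $u\pi = vi$ together with the collapse of the fibre product guarantees that the resulting arrow $w : \oCp \rightarrow Z$ restricts to $u$ along $j$ and to $v$ along $\pi'$, and is the unique such arrow.

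I expect the main obstacle to be the passage across the nilpotent thickening: one must show that the homomorphism $\cO_Z \to \pi'_\ast \cO_{C'}$ (and $M_Z \to \pi'_\ast M_{C'}$) obtained from $v$ genuinely factors the contraction, equivalently that $v$ is constant on the contracted chains of $C'$ given only that $v i$ is constant on the contracted chains of $C$. This is precisely where the strength of \cite[Theorem~B.6]{AMW} is essential: because the contracted chains are proper and rational, the equalities $\pi'_\ast \cO_{C'} = \cO_{\oCp}$ and $\pi'_\ast M_{C'} = M_{\oCp}$ persist in the presence of nilpotents, so the contraction commutes with the infinitesimal thickening and the desired factorization is automatic.
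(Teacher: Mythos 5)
Your proof is correct and follows essentially the same route as the paper's (much terser) argument: identify the underlying space of $\oCp$ as the pushout of the underlying spaces, identify $\cO_{\oCp}$ and $M_{\oCp}$ with the pushforwards from $C'$ via \cite[Theorem~B.6]{AMW}, and deduce the universal property from the collapse of the resulting fibre products. Your closing worry about factoring through the contraction is already resolved by the topological identification $|C| = |C'|$ together with the pushforward description of the sheaves, so no additional input is needed there.
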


\begin{proof} \change{As $C \rightarrow C'$ and $\oC \rightarrow \oCp$ are homeomorphisms, the underlying topological space of $\oCp$ is the pushout of the maps underlying $C \rightarrow \oC$ and $C \rightarrow C'$.}   
\change{As in \cite[Lemma B.1]{AMW} the structure sheaf of $\oCp$ is the pushforward of the structure sheaf of $C'$, hence $\oCp$ is the pushout of underlying schemes.}
Also the logarithmic structure on $\oCp$ is \change{constructed as} the pushforward of the logarithmic structure on $C'$. This implies the result.
\end{proof}

\begin{lemma}
$\oCp$  is the pushout of the maps $C \rightarrow \oC$ and $C \rightarrow C'$ in the $2$-category of logarithmic stacks.
\end{lemma}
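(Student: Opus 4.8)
The plan is to verify the universal property of the pushout directly: for every logarithmic stack $\mathcal Z$ I must show that the natural functor
\begin{equation*}
\Hom(\oCp, \mathcal Z) \longrightarrow \Hom(\oC, \mathcal Z) \times_{\Hom(C, \mathcal Z)} \Hom(C', \mathcal Z)
\end{equation*}
is an equivalence of groupoids, where the right-hand side is the $2$-fibered product of groupoids, i.e.\ the category of compatible triples consisting of a map $\oC \to \mathcal Z$, a map $C' \to \mathcal Z$, and an isomorphism between their two restrictions to $C$. The content of the previous lemma is exactly that this functor is an equivalence---indeed a bijection of sets---whenever $\mathcal Z$ is representable by a logarithmic scheme, since in that case all four groupoids are discrete and the statement is the ordinary $1$-categorical universal property.

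First I would reduce the general case to the representable one by descent. Choose a presentation $\mathcal Z = [R \rightrightarrows U]$ by a strict-\'etale (or smooth) groupoid of logarithmic schemes $U$ and $R$. An object of $\Hom(\oCp, \mathcal Z)$ is then, \'etale-locally on $\oCp$, a map to $U$ equipped with descent datum valued in $R$, and likewise for $\oC$, $C'$, and $C$; an isomorphism between two such objects is a map to $R$ lying over the given maps to $U \times U$. Applying the scheme case of the lemma to $U$ and to $R$ produces, from a compatible triple, the required map $\oCp \to U$ \'etale-locally together with its descent datum, and conversely identifies isomorphisms of maps $\oCp \to \mathcal Z$ with compatible pairs of isomorphisms; this is where the genuinely $2$-categorical content---the matching of the gluing isomorphism on $C$ with a map to $R$---is absorbed. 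Thus both essential surjectivity and full faithfulness of the displayed functor follow once the \'etale-local data can be assembled.

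The main obstacle is therefore the descent step: to assemble the \'etale-local maps and their descent data I must know that the small \'etale site of $\oCp$ is itself compatible with the pushout, i.e.\ that an \'etale $\oCp$-scheme is the same datum as a compatible pair of an \'etale $\oC$-scheme and an \'etale $C'$-scheme agreeing over $C$. Here I would use the structural facts underlying the previous lemma. Since $C \to C'$ is an infinitesimal thickening and the pushout realizes $\oC \to \oCp$ as an infinitesimal thickening as well, topological invariance of the small \'etale site identifies \'etale $\oCp$-schemes with \'etale $\oC$-schemes and \'etale $C'$-schemes with \'etale $C$-schemes; under these identifications the $C'$-datum in a compatible triple is forced to be the pullback of the $\oC$-datum along the proper surjection $C \to \oC$ contracting the semistable chains, so the category of compatible triples collapses to the category of \'etale $\oC$-schemes, hence to \'etale $\oCp$-schemes. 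This identification of small \'etale topoi lets me transport the groupoid $\mathcal Z(-)$ through the pushout and glue the local maps, while the values on the resulting covers are governed by the $1$-categorical universal property already established---thereby upgrading it to a pushout in the $2$-category of logarithmic stacks.
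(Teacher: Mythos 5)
Your strategy is sound and arrives at the same conclusion as the paper, but by a noticeably heavier route. The paper's proof is two sentences: the construction of $\oCp$ is local in the \'etale topology of $\oC$, so one may assume the given maps $\oC \to \cX$ and $C' \to \cX$ factor through a single smooth \emph{strict} chart $X \to \cX$; the previous (scheme-level) lemma then extends them uniquely and compatibly to $\oCp \to X$, and the uniqueness makes the composite $\oCp \to \cX$ independent of the chart, so the local constructions glue. You instead choose a groupoid presentation $[R \rightrightarrows U]$ and run a full descent argument. This has the advantage of making the genuinely $2$-categorical content (full faithfulness, the matching of gluing isomorphisms) explicit, and your observation that \'etale $\oCp$-schemes are identified with compatible triples via topological invariance of the \'etale site along the infinitesimal thickenings $C \hookrightarrow C'$ and $\oC \hookrightarrow \oCp$ is a clean justification of the localization step that the paper simply asserts. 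Both proofs ultimately rest on the same two ingredients: the scheme-level universal property and \'etale-local smooth charts for the target.

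One step you should make explicit, because as written it is the soft spot of the argument. From a compatible triple you obtain local lifts $V_{\oC} \to U$ and $V_{C'} \to U$ that agree on $V_C$ only up to an isomorphism $w : V_C \to R$, whereas the scheme-level pushout applied to $U$ requires literal agreement on $V_C$. Saying the discrepancy is ``absorbed'' by applying the lemma to $R$ is not quite right: the pushout property of $R$ only glues data that already agree over $C$, while $w$ is defined only on $V_C$ and is not itself part of a compatible pair. The fix is to first extend $w$ along the strict square-zero extension $V_C \hookrightarrow V_{C'}$ using the (strict) smoothness of the relevant projection $R \to U$, and then replace the lift $V_{C'} \to U$ by its translate under the extended isomorphism; only after this rectification do the two lifts literally agree on $V_C$, so that the pushout property for $U$ (and, on double overlaps, for $R$, to glue the descent data) applies. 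You should also record that the scheme-level lemma is stable under \'etale localization, so that it applies to the covers $V_{\oCp}$ and their fiber products. With those two points inserted, your argument is complete.
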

\begin{proof}
The construction of $\oCp$ is local in the \'etale topology of $\oC$, so we may work \'etale locally in $\oC$.  We may therefore assume that given maps $\oC \rightarrow \cX$ and $C' \rightarrow \cX$ factor through a smooth, strict chart $X \rightarrow \cX$.  But then these maps extend uniquely in a compatible way to $\oCp \rightarrow X$ by the previous lemma.  The uniqueness of this extension guarantees that the induced map $\oCp \rightarrow \cX$ is independent of the chart and therefore descends.
\end{proof}

\subsection{Birationality: proof of Proposition \ref{Prop:Costello}}
\begin{proposition} \label{prop:univ}
Suppose that $\cY \rightarrow \cX$ is a logarithmic modification of Artin fans.  Then the maps $\fM'(\cY \rightarrow \cX) \rightarrow \fM(\cX)$ and $\fM'(\cY \rightarrow \cX) \rightarrow \fM(\cY)$ are birational.
\end{proposition}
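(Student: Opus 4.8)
The plan is to reduce to the combinatorial model, construct explicit sections of the two projections, and show that each section is an open immersion onto an open substack over which the projection is an isomorphism; the birationality is then visible over a dense open where the fibre is a single reduced point.

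First I would use Corollary~\ref{cor:proper-fans} to replace ``logarithmic modification'' by ``subdivision'': working \'etale-locally we may assume $\cX = \cA_\sigma$ and $\cY = \cA_\Sigma$ for a subdivision $\Sigma$ of $\sigma$. In particular, for any logarithmically smooth curve $\oC \to \cX$ the fine and saturated fibre product $\oC \times_{\cX}^{\mathrm{fs}} \cY$ is a logarithmic modification of $\oC$; since a logarithmic modification of a curve only subdivides its nodes into chains of rational components, it is again prestable, and the projection $\oC \times_{\cX}^{\mathrm{fs}} \cY \to \oC$ is logarithmically \'etale. The lifting arguments of Section~\ref{Sec:universal}, applied to $\fM(\cY\to\cX)$ in place of $\fM'(\cY\to\cX)$, show that both projections $p\colon\fM(\cY\to\cX)\to\fM(\cX)$ and $q\colon\fM(\cY\to\cX)\to\fM(\cY)$ are logarithmically \'etale; moreover, exactly as in Lemma~\ref{lem:strictness} (using the isomorphism $\tau_\ast M_C \simeq M_{\oC}$ of \cite[Appendix~B]{AMW}), the map $q$ is strict, hence \'etale.

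For $p$ I would use the assignment $\oC \mapsto (\oC\times_{\cX}^{\mathrm{fs}}\cY \to \oC,\ \oC\times_{\cX}^{\mathrm{fs}}\cY \to \cY)$ to define a section $s$ of $p$. The key point is that $s$ does not enlarge the minimal monoid: because the new nodes introduced by the subdivision sit over the walls of the \emph{fixed} fan $\Sigma$, they carry no moduli, so the minimal logarithmic structure of the resulting diagram coincides with that of $\oC\to\cX$. Thus $p$ is strict, hence \'etale, along $s$; as a section of an \'etale morphism is an open immersion, $s$ identifies $\fM(\cX)$ with an open substack $W\subseteq\fM(\cY\to\cX)$ over which $p$ is an isomorphism. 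Birationality of $p$ then reduces to the statement that over a dense open $V\subseteq\fM(\cX)$ one has $p^{-1}(V)=s(V)$. For $q$ I would argue dually: given $C\to\cY$, composing with the subdivision and applying the contraction of semistable chains of \cite[Appendix~B]{AMW} produces a canonical $\oC$ with $C\to\oC$ logarithmically \'etale and $\oC\to\cX$, defining a section $t$ of $q$; since $q$ is \'etale, $t$ is again an open immersion onto an open substack over which $q$ is an isomorphism.

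The hard part will be showing that over a dense open substack the fibre of each projection is a single reduced point, i.e.\ that the two sections are genuine inverses over dense opens rather than merely right inverses. This is a tropical uniqueness statement: a \emph{minimal} logarithmic curve admits a unique logarithmic modification compatible with the fixed subdivision $\Sigma$, because any further subdivision would split an edge at a point not forced by $\Sigma$ and would therefore enlarge the minimal monoid, moving the object out of the fibre over $\fM(\cX)$. Establishing this uniqueness, together with checking that the section and the contraction globalize in families and remain mutually inverse over the relevant dense opens, is where the real work lies; with it in hand the isomorphisms $p^{-1}(V)\simeq V$ and its analogue for $q$ exhibit both projections as birational.
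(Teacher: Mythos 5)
There is a genuine gap, and you have named it yourself: the entire content of birationality is the identification of a dense open over which the two projections are isomorphisms, and your proposal defers exactly this step (``the hard part\dots is where the real work lies''). Constructing sections $s$ and $t$ and observing that sections of \'etale maps are open immersions only produces open substacks $W \subseteq \fM(\cY\to\cX)$ over which the projections \emph{admit} a retraction; it does not show that $W$ is dense, nor that the fibres over a dense open of the target are single points. Your proposed ``tropical uniqueness'' argument about minimal monoids is also aimed at the wrong locus: over the boundary the fibres of $p$ genuinely can fail to be single points, so no uniqueness statement valid on all of $\fM(\cX)$ is available, and you give no candidate for the dense open $V$ on which to run it. Several intermediate claims (that $s$ does not enlarge the minimal monoid, hence that $p$ is strict along $s$) are asserted without proof and are not needed.

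The paper's proof sidesteps all of this by choosing the dense open explicitly: since $\fM(\cY\to\cX)$, $\fM(\cX)$ and $\fM(\cY)$ are all logarithmically smooth, each contains a dense open substack where the logarithmic structure is \emph{trivial}. Over a base $S$ with trivial logarithmic structure the curves $C$ and $\oC$ are smooth, the logarithmically \'etale map $C\to\oC$ is a branched cover whose stabilization is an isomorphism, hence has degree $1$ and is itself an isomorphism; consequently the datum \eqref{Eq:M'} reduces to the single map $\oC\to\cX$ (equivalently $C\to\cY$), and both projections restrict to isomorphisms on these loci. In other words, the uniqueness you flag as hard becomes trivial once one observes that on the interior no nodes and no subdivision occur. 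Your fibre-product section $\oC\mapsto\oC\fpr_{\cX}\cY$ does appear in the paper, but only over this trivial locus, where it is manifestly inverse to $p$. To repair your argument you would need to either adopt this choice of dense open or supply a complete proof of your uniqueness claim together with a proof that the locus where it applies is dense; as written, the proposal does not establish the proposition.
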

\begin{proof}
All of the stacks $\fM'(\cY \rightarrow \cX)$, $\fM(\cX)$, and $\fM(\cY)$ are logarithmically smooth.  Therefore they have dense open substacks where their logarithmic structures are trivial.  We show that these dense open substacks are all isomorphic to one another \change{by the given maps}.

Consider an $S$-point of $\fM'(\cY \rightarrow \cX)$, where $S$ has the trivial logarithmic structure.  We have a commutative diagram
\begin{equation} \label{eqn:2} \vcenter{\xymatrix{
C \ar[r] \ar[d] & \cY \ar[d] \\
\oC \ar[r] & \cX 
}} \end{equation}
with both $C$ and $\oC$ logarithmically smooth.  This implies first that the underlying curves of $C$ and $\oC$ are smooth, and second that the map of schemes underlying $C \rightarrow \oC$ is a branched cover.  But the stabilization of $C \rightarrow \oC$ must also be an isomorphism, so its degree must be $1$ and therefore $C \rightarrow \oC$ is an isomorphism.  This proves that $\fM(\cY \rightarrow \cX) \rightarrow \fM(\cY)$ is an isomorphism over the loci with trivial logarithmic structures.

Now consider an $S$-point $\oC \rightarrow \cX$ of $\fM(\cX)$, \change{where $S$ is still assumed to have trivial logarithmic structure, so the underlying curve of $\oC$ is smooth}.  Consider the fine and saturated base change $C = \oC \fp_{\cX} \cY$.  The fibers of $\oC$ over $S$ are logarithmically smooth and the logarithmic structure of each is associated to a smooth Cartier divisor.  Therefore $C = \oC$ by Corollary~\ref{cor:log-mod-df1}.  This immediately yields a section of $\fM'(\cY \rightarrow \cX)$ over \change{the locus where the logarithmic structure is trivial.}  It remains only to verify that if~\eqref{eqn:2} is an $S$-point of $\fM'(\cY \rightarrow \cX)$ then $C = \oC \fp_{\cX} \cY$.  However, this follows from the fact that $C \rightarrow \oC$ is an isomorphism, as we saw above.
\end{proof}

\section{Obstruction theories}

\subsection{The arrow $\ocM(X) \to \fM(\cX)$}\label{Sec:obs X}
First we show that the natural obstruction theory for $\ocM(X)$ over $\fM(\cX)$ agrees with the one over \change{$\Log(\fM)$} defined in~\cite{AC,Chen,GS}.  Let $S \subset S'$ be a strict square-zero extension over $\fM(\cX)$ with ideal $J$ and assume given an $S$-point of $\ocM(X)$.  We have a diagram of solid lines
\change{
\begin{equation} \label{eqn:5} \vcenter{\xymatrix{
& & X \ar[d]  \\
\oC \ar[r] \ar@/^15pt/[urr]^f \ar[d] & \oC' \ar[r] \ar@{-->}[ur] \ar[d] & \cX \\
S \ar[r] & S' .
}} \end{equation}}
Note that because $\cX$ is \'etale over $\Log$, lifts of this diagram are precisely the same as lifts of the diagram
\change{
\begin{equation*} \xymatrix{
& & X \ar[d]  \\
\oC \ar[r] \ar@/^15pt/[urr]^f \ar[d] & \oC' \ar[r] \ar@{-->}[ur] \ar[d] & \Log \\
S \ar[r] & S' .
} \end{equation*}}
Since $X$ is smooth over $\cX$, the logarithmic lifts of either of these diagrams form a torsor on $C$ under the sheaf of abelian groups $f^\ast T_{X / \cX} \tensor J = f^\ast T_X^{\log} \tensor J$.  Therefore if we define $\cE(J)$ to be the stack on $S$ of $f^\ast T_{X}^{\log} \tensor J$-torsors on $C$ we obtain an obstruction theory in the sense of \cite{obs} for $\ocM(X)$ \change{over $\fM(\cX)$} or over $\Log(\fM)$.  The latter of these is the one defined in \cite{AC,Chen,GS}.

\subsection{The arrow $\ocM(Y) \to \fM'(\cY \to\cX)$}
A similar argument will apply to give the obstruction theory for $\ocM(Y)$ \change{over $\fM(\cY)$.}  \change{Since  $\fM'(\cY \to\cX) \to \fM(\cY)$ is \'etale this also serves as an obstruction theory over $\fM'(\cY \to\cX)$. Explicitly,}  a lifting problem
\begin{equation*} \xymatrix{
S \ar[r] \ar[d] & \ocM(Y) \ar[d] \\
S' \ar@{-->}[ur] \ar[r] & \fM'(\cY \rightarrow \cX)
} \end{equation*}
corresponds to the following lifting problem:
\begin{equation} \label{eqn:6} \vcenter{\xymatrix{
& & Y \ar[d] \\
C \ar[r] \ar[d]_\tau \ar@/^15pt/[urr]^g & C' \ar@{-->}[ur] \ar[r] \ar[d] & \cY \ar[d] \\
\oC \ar[r] \ar[d] & \oCp \ar[r] \ar[d] & \cX  \\
S \ar[r] & S' .
}} \end{equation}
As before, the lifts form a torsor under $g^\ast T_{Y / \cY} \tensor J = g^\ast T_Y^{\log} \tensor J$.  Taking $\cE'(J)$ to be the stack on $S$ parameterizing torsors on $C$ under $g^\ast T_Y^{\log} \tensor J$ therefore gives a perfect relative obstruction theory.

 \change{We now demonstrate that the obstruction for $\ocM(X)$ considered in Section~\ref{Sec:obs X} pulls back to \emph{the same} obstruction theory for $\ocM(Y)$ over $\fM'(\cY \rightarrow \cX)$.}  

\change{Recall that $\cE(J)$ was defined in Section~\ref{Sec:obs X}} to be the stack of $f^\ast T_X^{\log} \tensor J$-torsors on $\oC$ and the obstruction was the torsor of lifts of the diagram below:
\begin{equation} \label{eqn:8}\change{ \xymatrix{
\oC \ar[r]^f \ar[d] & X \ar[d] \\
\oCp \ar[r] \ar@{-->}[ur] & \cX}
} \end{equation}

To identify $\cE(J)$ with $\cE'(J)$ we note that because $C \rightarrow \oC$ is a contraction of chains of rational curves \change{and $Y \to X$ logarithmically \'etale,} 
\begin{equation*}
R \tau_\ast (g^\ast T_Y^{\log} \tensor J) = R \tau_\ast (\tau^\ast f^\ast T_X^{\log}  \tensor J) = f^\ast T_X^{\log} \tensor J
\end{equation*}
so that $g^\ast T_Y^{\log} \tensor J$-torsors on $C$ may be identified with $f^\ast T_X^{\log} \tensor J$-torsors on $\oC$ by pullback.  Moreover, the compatibility of the lifting problems
\begin{equation*} \vcenter{\xymatrix{
C \ar[r] \ar[d] & \oC \ar[r] \ar[d] & X \ar[d] \\
C' \ar@{-->}[urr] \ar[r] & \oCp \ar@{-->}[ur] \ar[r] & \cX
}} \end{equation*}
ensures that the torsors of lifts of~\eqref{eqn:6} and~\eqref{eqn:8} are identified.  This shows that the obstruction theories coincide.

\subsection{Conclusion} We have therefore proved the following precise restatement of Proposition~\ref{Prop:relative-obstruction}:
\begin{proposition} \label{prop:obs}
Let $\cE$ denote the perfect relative obstruction theory for $\ocM(X)$ over $\Log(\fM)$ and let $\cE'$ denote the perfect relative obstruction theory for $\ocM(Y)$ over $\Log(\fM)$.  Then 
\begin{enumerate}
\item $\cE$ is also a perfect relative obstruction theory for $\ocM(X)$ over $\fM(\cX)$, \change{and in particular $[\ocM(X)]^\vir = (\psi_X)_{\cE}^! [{\fM(\cX)}]$.}
\item $\cE'$ is also a perfect relative obstruction theory for $\ocM(Y)$ over $\fM'(\cY \rightarrow \cX)$,  \change{and in particular $ [\ocM(Y)]^\vir =   {(\psi'_Y)_{\cE'}^! [\fM'(\cY\to \cX)].}$}
\item \change{$\ocM(h)^\ast \cE = \cE'$.}
\end{enumerate}
\end{proposition}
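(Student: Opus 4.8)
The plan is to read all three assertions off the torsor description of logarithmic lifts set up in the two preceding subsections, together with the observation that the base stacks in play differ only by logarithmically \'etale maps, under which relative deformation theory is unchanged. For assertion (1), I would begin with a strict square-zero extension $S \subset S'$ with ideal $J$ over $\fM(\cX)$, presenting the lifting problem of diagram~\eqref{eqn:5}. Because $\cX \to \Log$ is \'etale, the logarithmic lifts of $C' \to X$ over $\cX$ agree with the lifts over $\Log$, so $T_{X/\cX} = T_X^{\log}$ and the obstruction is governed by the stack of $f^\ast T_X^{\log} \otimes J$-torsors on $C$. Using in addition the strictness of $\ocM(X) \to \fM(\cX)$ from Lemma~\ref{lem:strictness}, this torsor is the same whether $\ocM(X)$ is viewed as relative to $\Log(\fM)$ or to $\fM(\cX)$; since the former is exactly the obstruction theory of \cite{AC,Chen,GS}, this identifies $\sE$ as a perfect relative obstruction theory over both bases.

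For assertion (2), I would run the identical computation via diagram~\eqref{eqn:6}: a lifting problem for $\ocM(Y)$ over $\fM'(\cY \to \cX)$ is controlled by the torsor of lifts of $C' \to Y$ over $\cY$, a torsor under $g^\ast T_{Y/\cY} = g^\ast T_Y^{\log}$. The key reduction is that $\fM'(\cY \to \cX) \to \fM(\cY)$ is \'etale and strict by Proposition~\ref{Prop:M'}, so the bottom row of~\eqref{eqn:6} is irrelevant to the lifting question and the relative obstruction theory of $\ocM(Y)$ over $\fM'(\cY \to \cX)$ coincides with $\sE'$. For the comparison (3), I would use the cartesian square~\eqref{eqn:4}, $Y = X \fpr_{\cX} \cY$, to identify $T_{Y/\cY}$ with the pullback of $T_{X/\cX}$ along $Y \to X$; since $\Phi : \ocM(Y) \to \ocM(X)$ preserves the underlying curve $C$ and the ideal $J$, the torsor computing $\sE'$ is literally the $\Phi$-pullback of the one computing $\sE$, giving $\Phi^\ast \sE = \sE'$.

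The hard part will not be the torsor bookkeeping itself but the verification that moving among the base stacks $\Log(\fM)$, $\fM(\cX)$, $\fM(\cY)$, and $\fM'(\cY \to \cX)$ genuinely leaves the relative deformation theory intact. This rests on two inputs: the \'etaleness of $\cX \to \Log$ and $\cY \to \Log$, which collapses the logarithmic lifting problem onto the $\cX$- and $\cY$-relative ones; and the \'etaleness and strictness of $\fM'(\cY \to \cX) \to \fM(\cY)$ from Proposition~\ref{Prop:M'}, which ensures that the additional moduli of logarithmically \'etale modifications carry no deformations or obstructions. Once these reductions are in place, all three assertions follow formally from the naturality of the torsor of logarithmic lifts under the cartesian square~\eqref{eqn:4}.
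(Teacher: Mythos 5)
Your treatment of assertions (1) and (2) follows the paper's argument essentially verbatim: you reduce the lifting problems \eqref{eqn:5} and \eqref{eqn:6} to torsors under $f^\ast T_X^{\log}\tensor J$ and $g^\ast T_Y^{\log}\tensor J$ using the \'etaleness of $\cX\to\Log$ and the smoothness of $X\to\cX$, and you discard the extra row of \eqref{eqn:6} via the strict \'etale map $\fM'(\cY\to\cX)\to\fM(\cY)$ of Proposition~\ref{Prop:M'}. That part is fine.

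The gap is in (3). You assert that ``$\Phi:\ocM(Y)\to\ocM(X)$ preserves the underlying curve $C$'' and conclude that the torsor computing $\sE'$ is ``literally'' the $\Phi$-pullback of the one computing $\sE$. But $\Phi$ is the map of \cite[Theorem~B.6]{AMW}: it sends $C\to Y$ to the \emph{stabilized} map $\oC\to X$, where $\tau:C\to\oC$ contracts chains of semistable components. (If $\Phi$ preserved the curve, the entire apparatus of $\fM'(\cY\to\cX)$ and the pushout lemmas of Section~\ref{Sec:M'toM} would be unnecessary.) Consequently $\Phi^\ast\sE$ at a point $C\to Y$ is a stack of torsors on $\oC$ under $\bar f^\ast T_X^{\log}\tensor J$, whereas $\sE'$ is a stack of torsors on $C$ under $g^\ast T_Y^{\log}\tensor J\simeq\tau^\ast\bar f^\ast T_X^{\log}\tensor J$; these live on different curves and are not literally equal. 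Identifying them is the one nontrivial point of the proposition, and it is where the cartesian square \eqref{eqn:4} actually does its work: a lift $C'\to Y$ in \eqref{eqn:6} is the same as a lift $C'\to X$ of $C'\to\cY\to\cX$ by the universal property of $Y=X\fpr_{\cX}\cY$, which in turn is the same as a lift $\oCp\to X$ in the induced diagram \eqref{eqn:5}, because $\oCp$ is the pushout of $C\to\oC$ and $C\to C'$ in logarithmic stacks. This equivalence of lifting problems (equivalently, the isomorphism $R\tau_\ast\tau^\ast F\simeq F$ for the contraction $\tau$) is what yields $\Phi^\ast\sE=\sE'$; an appeal to equality of the underlying curves does not.
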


We may now combine Propositions~\ref{prop:univ} and~\ref{prop:obs} with Costello's theorem~\cite[Theorem~5.0.1]{Costello} to deduce \change{$\ocM(h)_\ast [\ocM(Y)]^\vir = [\ocM(X)]^\vir$.}

\bibliographystyle{amsalpha}             
\bibliography{loginv}       
\end{document}